\newtheorem{theorem}{Theorem}[section]
\newtheorem{lemma}[theorem]{Lemma}
\newtheorem{proposition}[theorem]{Proposition}
\newtheorem{assumption}[theorem]{Assumption}
\newtheoremstyle{mystyle}
{}
{}
{\normalfont}
{}
{\normalfont\itshape}
{.}
{ }
{}
\theoremstyle{mystyle}
\newtheorem{remark}[theorem]{Remark}
\numberwithin{equation}{section}
\newcommand{\R}{\mathbb{R}}
\newcommand{\N}{\mathbb{N}}
\newcommand{\smax}{\sigma_{\max}}
\renewcommand{\d}{\text{d}}
\newcommand{\conv}{\operatorname{conv}}
\newcommand{\cone}{\operatorname{cone}}
\newcommand{\dual}[2]{\langle #1 , #2 \rangle}
\renewcommand{\AA}{\mathcal{A}}
\newcommand{\II}{\mathcal{I}}
\newcommand{\NN}{\mathcal{N}}
\newcommand{\BB}{\mathcal{B}}
\newcommand{\ie}{i.e., }
\newcommand{\eg}{e.g., }
\newcommand{\weak}{\rightharpoonup}
\newcommand{\embed}{\hookrightarrow}
\providecommand{\keywords}[1]{\textbf{Keywords.} #1}
\title{\bf Parabolic optimal control
	problems with combinatorial switching constraints \\ Part II: Outer approximation algorithm\thanks{This work has partially been supported by Deutsche Forschungsgemeinschaft (DFG) under grant no.~BU~2313/7-1 and ME~3281/10-1.}}
\author{Christoph~Buchheim, Alexandra~Gr\"utering and Christian~Meyer\footnote{\{christoph.buchheim,alexandra.gruetering,christian.meyer\}@math.tu-dortmund.de}}
\date{\vspace{-5ex}}
\affil{Department of Mathematics, TU Dortmund University, Germany}
\begin{document}
\maketitle
\renewcommand{\abstractname}{} 
\begin{abstract}
   We consider optimal control problems for partial differential
  equations where the controls take binary values but vary over the
  time horizon, they can thus be seen as dynamic switches. The
  switching patterns may be subject to combinatorial constraints such
  as, \eg an upper bound on the total number of switchings or a
  lower bound on the time between two switchings. 
  In a companion paper [\href{https://arxiv.org/abs/2203.07121}{arXiv:2203.07121}], 
  we describe the $L^p$-closure of the convex hull of feasible switching patterns
  as intersection of convex sets derived from finite-dimensional projections. 
  In this paper, the resulting outer description is used for the construction of an outer approximation algorithm 
  in function space, whose iterates are proven to converge strongly in $L^2$ to 
  the global minimizer of the convexified optimal control problem.
  The linear-quadratic subproblems arising in each iteration of the outer approximation algorithm 
  are solved by means of a semi-smooth Newton method. 
  A numerical example in two spatial dimensions illustrates the efficiency of the overall algorithm.\\
  
  \keywords{PDE-constrained optimization, switching time optimization, outer approximation}
\end{abstract}

\section{Introduction} \label{sec: intro}

This paper is concerned with the design of an outer approximation algorithm for tailored convex relaxations 
of parabolic optimal control problems with combinatorial switching constraints, of the 
following form:
\begin{equation}\tag{P}\label{eq:optprob}
\left\{\quad
\begin{aligned}
\text{min} \quad & J(y,u) = \tfrac{1}{2}\, \|y - y_{\textup{d}}\|_{L^2(Q)}^2 + \tfrac{\alpha}{2}\,\|u-\tfrac 12\|^2_{L^2(0,T; \R^n)}\\
\text{s.t.} \quad & 
\begin{aligned}[t]
\partial_t y(t,x) - \Delta y(t,x) &= \sum_{j=1}^n u_j(t) \,\psi_j(x) & & \text{in } Q := \Omega \times (0,T),\\
y(t,x) &= 0 & & \text{on } \Gamma := \partial\Omega \times (0,T),\\  
y(0,x) &= y_0(x) & & \text{in } \Omega,
\end{aligned}\\
\text{and} \quad & u \in D.
\end{aligned}
\quad \right.
\end{equation}
Herein, $T > 0$ is a given final time and $\Omega\subset \R^d$, $d\in
\N$, is a bounded domain, \ie a bounded, open, and connected set,  with Lipschitz boundary $\partial \Omega$ in the sense of
\cite[Def. 1.2.2.1]{GRIS85}. The
form functions $\psi_j\in H^{-1}(\Omega)$, $j=1,\dots,n$, as well as
the initial state~$y_0 \in L^2(\Omega)$ are given. Moreover, $y_{\textup{d}} \in L^2(Q)$ is a given desired state
and $\alpha \geq 0$ is a Tikhonov parameter weighting the mean
deviation from~$\tfrac 12$. Finally, 
\[
D \subset \big\{ u \in BV(0,T;\R^n)\colon u(t) \in \{0,1\}^n \text{ f.a.a.\ } t \in (0,T)\big\}
\]
denotes the set of feasible \emph{switching controls}. The precise assumptions on $D$ 
and examples for such a set are given in Section~\ref{sec: pre} below. 

Problems of type~\eqref{eq:optprob} arise in various applications such as shifting of gear-switches in automotive
engineering or valves in gas and water networks, see~\eg~\cite{GER05,FUEG09,KSB10, SBF13,HAN20}.
Thus, there exists a variety of different approaches for their numerical solution and we give a brief overview 
without claiming to be exhaustive. 
One approach is to discretize the optimal control problem, which typically leads to a large-scale mixed-integer 
nonlinear  programming problem that is then solved by standard methods, 
see, \eg~\cite{GER05, Lee12, BKLL13, GPRS19, SH20}. 
Other methods employ convexifications accompanied by subsequent tailored rounding strategies. 
We exemplarily refer to~\cite{SA05,SA12,HS13,JUN14,KML17,MAN19,KLM20,ZS20} 
and the references therein.
If $D$ imposes a bound on the $BV$-seminorm (as will also be the case in our setting), then 
the elements in~$D$ are determined by a finite number of switching times. 
Several approaches rely on this observation and aim at optimizing these switching times, 
see \eg~\cite{GER06,EWA06,JM11,FMO13,HR16,ROL17,SOG17}.
In yet another class of methods, non-smooth penalty techniques, partly in combination with 
convexification, are used to account for the switching constraints in $D$. 
We refer to~\cite{CK14,CK16,CTW18,CKK18,Wac19} and the references therein.

In our companion paper~\cite{partI}, we propose an entirely different approach for the solution 
of~\eqref{eq:optprob}. This approach is based on a tailored convexification of~\eqref{eq:optprob}, 
which is built by cutting planes derived from finite-dimensional projections.
The numerical experiments in~\cite{partI} demonstrate that our convexification generally provides better dual bounds 
than the naive relaxation, which is obtained by replacing the binarity constraints $u\in \{0,1\}^n$ by $u\in [0,1]^n$ in the definition of~$D$. 
Even more, the prototypical example in~\cite[Counterexample~3.1]{partI} shows that the naive relaxation does not 
give the closure of the convex hull of $D$ in any $L^p(0,T;\R^n)$ in general.
In addition, the naive relaxation may not benefit from the particular problem structure, as the 
the investigation of the min-up/min-down polytope in~\cite{JRS15} demonstrates.
The reason is that the number of facets depends heavily on the discretization as shown by~\cite{KMS11}.

By contrast, our approach provably generates the closure of the convex hull in the limit.
To be more precise, it is shown in~\cite{partI} 
that $\overline{\conv (D)} = \bigcap_{k\in \N} V_k$, with the closure taken in~$L^p(0,T;\R^n)$, see Theorem~\ref{thm: convD} below.
Herein, the sets~$V_k$ correspond to sets in $\R^{M_k}$ that, for prominent examples, can be shown to be polytopes
for which the separation problem is tractable; see~\cite[Section~3.1 and 3.2]{partI}.
In this paper, we will combine the separation algorithm for $V_k$ with the passage to the limit $k\to \infty$ 
in order to obtain an \emph{outer approximation algorithm} whose 
iterates converge strongly in $L^2(0,T;\R^n)$ to the global minimizer of~\eqref{eq:optprob} 
with $D$ replaced by $\overline{\conv (D)}$.
Outer approximation algorithms are well-established methods for the solution of 
mixed-integer nonlinear programs, see, \eg the classical references~\cite{DG86, FL94}, 
and have also proven to work for combinatorial optimal control problems involving PDEs~\cite{BKM18}.
It is to be underlined that the sets $V_k$ are designed by means of finite dimensional projections of the 
control variable only, without any discretization of the PDE in~\eqref{eq:optprob}. 
Thus, by addressing the PDE in function space, we avoid the curse of
dimensionality caused by the widely used first-discretize-then-optimize approach. 

The plan of the paper reads as follows: 
In Section~\ref{sec: pre}, we state the standing assumptions on $D$ and recall the main results of our companion 
paper~\cite{partI} that will be needed for the construction and the analysis of our outer approximation algorithm. 
Section~\ref{sec: outerapprox} is devoted to the convergence analysis of the outer approximation algorithm 
for $k\to \infty$. In each iteration, a linear-quadatic optimal control problem subject to additional inequality constraints
is solved. This is done by means of a semi-smooth Newton method in function space presented in 
Section~\ref{sec: numsol}. 
The performance of the overall algorithm is tested in Section~\ref{sec: casestudy} based on a finite element 
discretization of a prototypical optimal control problem.
Finally, Section~\ref{sec: appendix} is dedicated to an existence result for Lagrange multipliers 
required for the design of the semi-smooth Newton method.

\section{Preliminaries} \label{sec: pre}
The main objective of this paper is to develop an efficient solution approach for the convexification of~\eqref{eq:optprob} given by
\begin{equation}\tag{PC}\label{eq:PC}
\left\{\quad
\begin{aligned}
\text{min} \quad & f(u):= J(Su,u)\\
\text{s.t.} \quad & u \in \overline{\conv D}^{L^p(0,T;\R^n)}\;.
\end{aligned}
\qquad\right.
\end{equation}
Hereby, $S\colon
L^2(0,T;\R^n) \to W(0,T):=H^1(0,T;H^{-1}(\Omega)) \cap L^2(0,T; H^1_0(\Omega))$ denotes the solution operator associated with the PDE in~\eqref{eq:optprob}, which admits for every control function~$u\in D\subset L^\infty(0,T;\R^n)$ a unique weak solution; see, \eg
\cite[Chapter~3]{Troe10}. Note that the objective function $f\colon L^2(0,T;\R^n) \to \R$ is
weakly lower semi-continuous because the mappings $u \mapsto \|Su -
y_{\textup{d}}\|_{L^2(Q)}^2$ and~$u \mapsto \|u-\tfrac 12 \|_{L^2(0,T;\R^n)}^2$
are both convex and lower semi-continuous, thus weakly lower
semi-continuous, and the solution operator $S$ is affine and
continuous, thus weakly continuous.
The
set \[D\subset \big\{ u \in BV(0,T;\R^n)\colon u(t)
\in \{0,1\}^n \text{ f.a.a.\ } t \in (0,T)\big\}\] of feasible switching controls is supposed to satisfy the two
following assumptions:
\begin{align}
& \text{$D$ is a bounded set in $BV(0,T;\R^n)$,} \tag{D1}\label{eq:D1} \\
& \text{$D$ is closed in $L^p(0,T;\R^n)$ for some fixed $p \in [1,\infty)$,} \tag{D2}\label{eq:D2}
\end{align}
where $BV(0,T;\R^n)$ denotes the set of all vector-valued functions with bounded variation, \ie
\[BV(0,T;\R^n):=\{u\in L^1(0,T;\R^n): u_i\in BV(0,T) \text{ for } i=1,\ldots,n\,
\}\] equipped with the norm
\[\|u\|_{BV(0,T;\R^n)} := \|u\|_{L^1(0,T;\R^n)} + \sum_{j=1}^n |u_j|_{BV(0,T)}\;.\]
For controls $u\in D$ the BV-seminorm~$|u_j|_{BV(0,T)}$ agrees with the minimal number of switchings of any representative of~$u_j$ with values
in~$\{0,1\}$. More details on the space of bounded variation functions can be found in
\cite[Chap.~10]{ATT14}. 

The set $D$ covers a wide range of combinatorial switching constraints, for instance, in the case of an upper bound on the total number of switchings the set is given as 
\begin{equation}\label{eq:Dex}
\begin{aligned}
D_{\max} := \big\{ u \in BV(0,T;\R^n)\colon \;
& u(t) \in \{0,1\}^n \text{ f.a.a.\ } t \in (0,T),\\
& |u_j|_{BV(0,T)} \leq \smax\; \forall \,j = 1, \dots, n \big\},
\end{aligned}
\end{equation}
where $\smax\in \N$ is a given number. 

Under the assumptions~\eqref{eq:D1} and~\eqref{eq:D2} on the set $D$, the control problem~\eqref{eq:optprob} admits a global minimizer and the convex hull of the feasible switching patterns can be fully described by cutting planes lifted from finite-dimensional projections; see~\cite{partI}.
For the latter, the set~$D$ is projected, by means of \begin{equation}\label{eq:pi}
\Pi\colon BV(0,T;\R^n) \ni u \mapsto \big(\langle \Phi_i, u
\rangle\big)_{i=1}^M \in \R^{M}\;,
\end{equation} to the finite-dimensional space~$\R^M$, where $\Phi_i \in
L^p(0,T;\R^n)^*$, $i=1, \dots, M$, are linear and continuous functionals, \eg local averaging operators of the
form \begin{equation}\label{eq:localaveraging} \langle \Phi_{(j-1)N+i}, u
\rangle := \tfrac{1}{\lambda(I_{i})}\int_{I_{i}} u_{j}\, \d t
\end{equation}
for $j=1,\ldots,n$ with suitably chosen subintervals $I_i\subset (0,T)$, $i=1,\ldots,N$, and $M:=n\,N$. 
Each projection~$\Pi$ then gives rise to a
relaxation of the closed convex hull of the set~$D$ in~$L^p(0,T;\R^n)$, which we will use to derive outer approximations by linear inequalities.
\begin{lemma}[{\cite[Lemma~3.2]{partI}}]\label{lem: convD}
	For any $\Pi$ as in~\eqref{eq:pi}, we have
	\[\overline{\conv (D)}^{ L^p(0,T;\R^n)}\subseteq \{ v \in
	L^p(0,T;\R^n)\colon \Pi(v) \in C_{D,\Pi}\}\;,\]
	where 
	\[
	C_{D,\Pi} := \conv\{\Pi (u)\colon u \in D \}\subset \R^{M}\;.
	\]
\end{lemma}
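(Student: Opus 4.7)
The plan is to show that the right-hand side
\[
R \;:=\; \{v \in L^p(0,T;\R^n)\colon \Pi(v) \in C_{D,\Pi}\} \;=\; \Pi^{-1}(C_{D,\Pi})
\]
is a \emph{closed convex} subset of $L^p(0,T;\R^n)$ that contains $D$. Since $\overline{\conv(D)}^{L^p(0,T;\R^n)}$ is by definition the smallest such set, the claimed inclusion then follows at once. The first step is to observe that, because each $\Phi_i \in L^p(0,T;\R^n)^*$ is a continuous linear functional on $L^p$, the map $\Pi$ extends (or simply \emph{is}) a continuous linear operator from $L^p(0,T;\R^n)$ into the finite-dimensional space $\R^M$.

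With this observation the two easy properties of $R$ are immediate: convexity follows from linearity of $\Pi$ together with convexity of $C_{D,\Pi}$, and $D \subseteq R$ holds because, for any $u \in D$, we have $\Pi(u) \in \Pi(D) \subseteq \conv(\Pi(D)) = C_{D,\Pi}$.

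The main obstacle is closedness of $R$, and by continuity of $\Pi$ this reduces to the question whether $C_{D,\Pi} \subseteq \R^M$ is closed. This is precisely the point at which assumptions \eqref{eq:D1} and \eqref{eq:D2} enter. By \eqref{eq:D1}, $D$ is bounded in $BV(0,T;\R^n)$; since $BV(0,T;\R^n)$ embeds compactly into $L^p(0,T;\R^n)$ for every $p\in[1,\infty)$ (a consequence of Helly's selection theorem together with the continuous embedding of $BV$ into $L^\infty$ in one temporal dimension), $D$ is relatively compact in $L^p(0,T;\R^n)$. Combining this with closedness of $D$ in $L^p(0,T;\R^n)$ from \eqref{eq:D2}, the set $D$ is actually compact in $L^p(0,T;\R^n)$. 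Continuity of $\Pi$ then transfers this to compactness of $\Pi(D) \subseteq \R^M$, and since the convex hull of a compact set in a finite-dimensional space is itself compact (an application of Carathéodory's theorem), $C_{D,\Pi} = \conv(\Pi(D))$ is compact and hence closed. Consequently $R = \Pi^{-1}(C_{D,\Pi})$ is closed in $L^p(0,T;\R^n)$, which completes the proof.
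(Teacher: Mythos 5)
Your proposal is correct and follows exactly the route the paper indicates: the right-hand side $\Pi^{-1}(C_{D,\Pi})$ is a closed convex set containing $D$ (with closedness of $C_{D,\Pi}$ deduced from the compactness of $D$ in $L^p(0,T;\R^n)$ via \eqref{eq:D1} and \eqref{eq:D2}), hence it contains $\overline{\conv(D)}^{L^p(0,T;\R^n)}$. This is precisely the argument sketched in the remark following Lemma~\ref{lem: convD}, so no further comparison is needed.
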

Note that, based on the general assumptions~\eqref{eq:D1} and~\eqref{eq:D2}, it is easy to see that the finite dimensional set $C_{D,\Pi}$ is closed in $\R^M$ for any projection and consequently, the set $\{ v \in
L^p(0,T;\R^n)\colon \Pi(v) \in C_{D,\Pi}\}$ is convex and closed in~$L^p(0,T;\R^n)$.   

In addition, projections~$\Pi_k$, for increasing~$k$, can be designed in such a way that an outer description of all finite-dimensional convex hulls $C_{D,\Pi_k}$
also leads to an outer description of the convex hull of
$D$ in function space.

\begin{theorem}[{\cite[Thm.~3.5]{partI}}]\label{thm: convD}
	For each~$k\in\N$, let~$I^k_1,\dots,I_{N_k}^k$, $N_k\in\N$, be
	disjoint open intervals in~$(0,T)$ such that
	\begin{itemize}
		\item[(i)] $\bigcup_{i=1}^{N_k} \overline{I_i^k} = [0,T]$ for all~$k\in\N$,
		\item[(ii)] $\max_{i=1,\dots,N_k}\lambda(I_i^k)\to 0\; \mbox{ for } k\to \infty$, and
		\item[(iii)] for each~$r\in\{1,\dots,N_{k+1}\}$ there
		exists~$i\in\{1,\dots,N_{k}\}$ such that~$I^{k+1}_{r}\subseteq
		I^k_i$, \ie the intervals form a nested sequence. 
	\end{itemize}
	Set $M_k := n\, N_k$ and define projections~$\Pi_{k}\colon BV(0,T;\R^n) \to \R^{M_k}$,
	for~$k\in\N$, by 
	\begin{equation}\label{eq:Pik}
	\langle\Phi_{(j-1)N_k+i}^k,u\rangle :=
	\tfrac{1}{\lambda (I_i^k)}\int_{I_i^k} u_j(t)\, \d t\;
	\end{equation}
	for $j=1,\ldots,n$ and $i=1,\ldots,N_k$. Moreover, set
	\[V_k:=\{ v \in L^p(0,T;\R^n)\colon \Pi_{k}(v) \in
	C_{D,\Pi_k}\}\;.\]
	Then
	$V_{k}\supseteq V_{k+1}$ for all $k\in\N$ and 
	\begin{equation}\label{eq:convD}
	\overline{\conv (D)}^{ L^p(0,T;\R^n)} = \bigcap_{k\in \N} V_k\;.
	\end{equation} 
	
\end{theorem}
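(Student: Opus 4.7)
The claim splits into two inclusions, the easy direction $\overline{\conv (D)}^{L^p}\subseteq\bigcap_k V_k$ and the approximation direction $\bigcap_k V_k\subseteq \overline{\conv (D)}^{L^p}$, plus the monotonicity $V_{k+1}\subseteq V_k$. My plan is to treat the three points in that order, with essentially all of the work going into the approximation direction.

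For the easy inclusion I would simply quote Lemma~\ref{lem: convD} applied to each~$\Pi_k$. For the monotonicity $V_{k+1}\subseteq V_k$, the key observation is that thanks to properties~(i) and (iii) the coarse intervals~$I_i^k$ decompose (up to null sets) into the finer intervals~$I_r^{k+1}$ contained in them, so that for every $v\in L^p(0,T;\R^n)$
\[
\tfrac{1}{\lambda(I_i^k)}\int_{I_i^k} v_j\,\d t \;=\; \sum_{r\colon I_r^{k+1}\subseteq I_i^k} \tfrac{\lambda(I_r^{k+1})}{\lambda(I_i^k)}\cdot \tfrac{1}{\lambda(I_r^{k+1})}\int_{I_r^{k+1}} v_j\,\d t.
\]
This exhibits a fixed linear map $T_k\colon\R^{M_{k+1}}\to\R^{M_k}$ with $\Pi_k=T_k\circ\Pi_{k+1}$ on~$L^p(0,T;\R^n)$; applying $T_k$ to a convex representation of $\Pi_{k+1}(v)$ by points~$\Pi_{k+1}(u^s)$, $u^s\in D$, immediately places $\Pi_k(v)$ in $C_{D,\Pi_k}$.

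For the non-trivial inclusion, let $v\in\bigcap_{k\in\N} V_k$. By Carathéodory applied to the finite-dimensional set $C_{D,\Pi_k}$, for each~$k$ I can write $\Pi_k(v)=\sum_{s=1}^{m_k}\lambda_{k,s}\Pi_k(u^{k,s})$ with $u^{k,s}\in D$ and $\lambda_{k,s}\geq 0$, $\sum_s\lambda_{k,s}=1$. Setting
\[
w_k \;:=\; \sum_{s=1}^{m_k}\lambda_{k,s}\,u^{k,s}\;\in\;\conv(D),
\]
linearity of~$\Pi_k$ yields the crucial identity $\Pi_k(w_k)=\Pi_k(v)$. Because $D$ is bounded in $BV(0,T;\R^n)$ by~\eqref{eq:D1} and the BV-norm is convex, the sequence $\{w_k\}$ is bounded in $BV(0,T;\R^n)$; by the compact embedding $BV(0,T;\R^n)\embed L^p(0,T;\R^n)$ (valid for all $p\in[1,\infty)$ in one dimension), I can extract a subsequence, not relabeled, with $w_k\to w^*$ strongly in $L^p(0,T;\R^n)$.

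It remains to identify $w^*=v$, which I view as the main obstacle. The idea is to test against $\varphi\in C([0,T];\R^n)$ and use that the averages of $v$ and $w_k$ agree on the partition at level~$k$. Denoting by $\varphi_k$ the componentwise piecewise-constant function obtained by averaging $\varphi$ over the intervals~$I_i^k$, the identity $\Pi_k(w_k)=\Pi_k(v)$ rearranges to
\[
\int_0^T \varphi\cdot(v-w_k)\,\d t \;=\; \int_0^T (\varphi-\varphi_k)\cdot(v-w_k)\,\d t,
\]
whose right-hand side is bounded by $\|\varphi-\varphi_k\|_{L^\infty}\|v-w_k\|_{L^1}$. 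Uniform continuity of $\varphi$ together with assumption~(ii) gives $\|\varphi-\varphi_k\|_{L^\infty}\to 0$, while $\|v-w_k\|_{L^1}$ remains bounded by the $BV$-bound and the embedding $L^p\embed L^1$ on the bounded interval $(0,T)$. Passing to the limit along the chosen subsequence therefore yields $\int_0^T\varphi\cdot(v-w^*)\,\d t=0$ for every $\varphi\in C([0,T];\R^n)$, and by the du~Bois-Reymond lemma (which applies for any $p\in[1,\infty)$ since $v-w^*\in L^1$) we conclude $v=w^*$ a.e. Hence $v\in\overline{\conv(D)}^{L^p}$, completing the proof.
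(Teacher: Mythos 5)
Your proof is correct. For context: the paper itself does not reprove the identity \eqref{eq:convD} at this point --- it only establishes the monotonicity $V_{k+1}\subseteq V_k$, by exactly the observation you make (properties (i) and (iii) let each entry of $\Pi_k$ be written as a convex combination of entries of $\Pi_{k+1}$), and it cites the companion paper for \eqref{eq:convD}. You instead supply a self-contained argument for the hard inclusion $\bigcap_k V_k\subseteq\overline{\conv(D)}^{L^p(0,T;\R^n)}$, and it holds up: Carath\'eodory gives finite convex representations of $\Pi_k(v)$ in $C_{D,\Pi_k}$; lifting the same coefficients to $w_k\in\conv(D)$ preserves the level-$k$ averages by linearity of $\Pi_k$; assumption \eqref{eq:D1} together with convexity of the $BV$-norm bounds $\{w_k\}$ in $BV(0,T;\R^n)$; and the compact embedding $BV(0,T;\R^n)\embed L^p(0,T;\R^n)$ is indeed valid for every $p<\infty$ in one space dimension (alternatively, extract an $L^1$-convergent subsequence and upgrade to $L^p$ using the uniform $[0,1]^n$-bound on the $w_k$). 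The identification $w^\star=v$ is the genuinely delicate step and your mechanism is sound: the matching averages give $\int_0^T\varphi_k\cdot(v-w_k)\,\d t=0$, the bound $\bigl|\int_0^T(\varphi-\varphi_k)\cdot(v-w_k)\,\d t\bigr|\le\|\varphi-\varphi_k\|_{L^\infty}\|v-w_k\|_{L^1}$ tends to zero because $\|\varphi-\varphi_k\|_{L^\infty}\le\omega_\varphi(h_k)\to 0$ by uniform continuity and (ii), and testing against all of $C([0,T];\R^n)$ suffices to conclude $v=w^\star$ a.e. Consistently with the paper's remark that (iii) is an added hypothesis relative to the companion result, your argument invokes (iii) only for the monotonicity and nowhere in the proof of \eqref{eq:convD}.
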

Compared to Theorem~3.5 in~\cite{partI}, the assumption (iii) in the above theorem is additional. It is easy to see that it guarantees $V_k\supseteq V_{k+1}$ for all $k\in\N$, considering that each entry of~$\Pi_k$ is a convex combination of entries of~$\Pi_{k+1}$. The second assertion~\eqref{eq:convD} has been proven in~\cite{partI}.

\section{Outer approximation algorithm} \label{sec: outerapprox}

In the following, we explain how to address the convexified problem~\eqref{eq:PC}
by an outer approximation approach. We use the outer descriptions of the sets $C_{D,\Pi}$ appearing in~\eqref{eq:convD} to cut off any control $u\in L^p(0,T;\R^n)$ violating some of the conditions $\Pi(u)\in C_{D,\Pi}$.

More formally, we first fix an operator $\Pi\colon BV(0,T;\R^n) \ni u
\mapsto \big(\langle \Phi_i, u \rangle\big)_{i=1}^M \in \R^{M}$ such that $\Pi(u)\notin C_{D,\Pi}$ holds. Since the convex set~$C_{D,\Pi}$
is closed in $\R^M$, it is the intersection of its
supporting half spaces and can be described by linear inequality
constraints. The number of necessary half
spaces can be infinite in general~\cite[Ex.~3.6]{partI}, but for many practically relevant
constraints~$D$, it turns out to be finite; see~\cite[Sect.~3.1 and~3.2]{partI}.
Let us define the set of
all valid linear inequalities for $C_{D,\Pi}$
as
\[
H_{D,\Pi}=\{(a,b)\in [-1,1]^M\times \R: a^\top w \leq b \ \forall w\in C_{D,\Pi}\}\;,
\]
where $a\in[-1,1]^M$ can be assumed without loss of generality by scaling. 
To cut off the infeasible control~$u$, we choose a violated linear inequality constraint and add this constraint to the problem.
For the rest of this section, we assume that the local averaging operators satisfy the conditions (i)--(iii)  of Theorem~\ref{thm: convD}.
Our outer approximation algorithm for~\eqref{eq:PC} then reads as follows:

\begin{algorithm}[H]
	\caption{Outer approximation algorithm for~\eqref{eq:PC}}\label{alg:PC}
	\begin{algorithmic}[1]
		\STATE{Set $k=0$, $T_0 = \emptyset$, $I_1^0=(0,T)$  and $N_0=1$.}
		\STATE{\label{it:step2} Solve 
			\begin{equation}\tag{PC\text{$_k$}}\label{eq:PCk}
			\left\{\quad		    
			\begin{aligned}
			\min\quad &f(u) \\ 
			\text{s.t.} \quad & u \in [0,1]^n \quad\text{a.e.~in }(0,T), \\
			& a^\top \Pi(u) \leq b \quad\forall \, (\Pi, a, b) \in T_k\;.
			\end{aligned}
			\qquad\right.
			\end{equation} 
			Let $u^k$ be the optimal solution.} 
		\IF{$u^k \in \overline{\conv D}^{L^p(0,T;\R^n)}$\label{it:step3}}
		\RETURN $u^k$ as optimal solution. 
		\ELSE
		\STATE{\label{it:step6}
			Determine intervals $I_i^{k+1}$, $1\leq i\leq N_{k+1}$, 
				such that $\Pi_{k+1}(u^k)\notin C_{D,\Pi_{k+1}}$.}
		\STATE{\label{it:step7}
			Find an optimizer $(a_{k+1}, b_{k+1})\in {\arg \max}_{(a,b)\in H_{D,\Pi_{k+1}}} (a^\top \Pi_{k+1}(u^k) -b)$.} 
		\STATE{Set $T_{k+1}=T_{k}\cup \{(\Pi_{k+1}, a_{k+1}, b_{k+1})\}$, $k = k+1$ and go to~\ref{it:step2}.}
		\ENDIF
	\end{algorithmic}
\end{algorithm}
Some remarks on Algorithm~\ref{alg:PC} are in order. First note that, by the standard direct method of calculus of variations, one can easily show the existence of a global minimizer for~\eqref{eq:PCk} and its uniqueness if the Tikhonov parameter $\alpha$ is positive. Step
\ref{it:step7} of the algorithm is well defined since
$C_{D,\Pi_{k+1}}\neq \emptyset$ and hence~$b$ is bounded from
below. Moreover, Step~\ref{it:step6} is well defined due to
\eqref{eq:convD}.  Consequently, an important subproblem in the outer
approximation algorithm consists in determining appropriate
intervals~$I_i$ of the local averaging operators, such that for a
given $u^k$ it holds~$\Pi(u^k) \notin C_{D,\Pi}$. In view of
Theorem~\ref{thm: convD}, the desired property $\Pi(u^k)\notin~C_{D,\Pi}$
follows as soon as~$\Pi$ is defined by a large enough number of small
enough intervals,  and remains valid for all further
refinements. Note,
however, that Step~\ref{it:step6} does not exclude to
set~$\Pi_{k+1}=\Pi_k$ if this suffices to cut off~$u^k$.  Finally,
we emphasize that the stopping criterion in Step~\ref{it:step3} is rather
symbolic; in general, it can be verified only by showing that no
further violated cutting planes exist, for any
projection.

From a practical point of view, we obtain~$u^k$ by solving the
parabolic optimal control problem~\eqref{eq:PCk}, so that we know~$u^k$
only subject to a given discretization of~$(0,T)$; see Section~\ref{sec:
	numsol} for more details on the numerical solution of
\eqref{eq:PCk}. One could thus argue that the best possible approach is
to choose the intervals~$I_i$ exactly as given by this
discretization. This may be a feasible approach provided that the
finite-dimensional separation algorithm for~$C_{D,\Pi}$, needed in
Step \ref{it:step7}, is fast enough to deal with problems of large
dimension~$M$, as it is the case for a switch-wise upper bound on the
total number of shiftings as defined in~\eqref{eq:Dex}; see Section~\ref{sec:
	casestudy}. However, one cannot expect such a fast separation
algorithm for general switching constraints, so that it may be
necessary to restrict oneself to a smaller number of intervals.

We now investigate the convergence behavior of Algorithm~\ref{alg:PC}.  It
turns out that choosing the most violated inequality in
Step~\ref{it:step7} is crucial to guarantee convergence; this is a
common choice in semi-infinite programming~\cite{Gust73}.  In
addition, we have to require additional assumptions on the partitions
of $(0,T)$ used for the construction of the local averaging
operators: besides the hypotheses~(i)--(iii) from Theorem~\ref{thm: convD}, we
have to assume that the partitions are quasi-uniform. For this
purpose, we introduce
\[
\tau_k:=\min_{1\le i \le N_k} \lambda(I_i^k) \quad \mbox{ and } \quad 
h_k:=\max_{1\le i \le N_k} \lambda(I_i^k),
\] and require
\begin{assumption}\label{assu:rel}
	There exists $\kappa>0$ such that 
	$h_k\leq \kappa\,\tau_k$ for every $k\in \N$. 
\end{assumption}

Given this assumption, we can prove the following 
\begin{theorem}\label{thm:PC}
	Assume that Algorithm~\ref{alg:PC} does not stop after a finite number of
	iterations and the sequence $I_1^k,\ldots,I_{N_k}^k$ resulting from
	Step \ref{it:step6} is constructed such that it meets the
	assumptions (i)--(iii) from Theorem~\ref{thm: convD} and Assumption \ref{assu:rel}.
	Suppose in addition that the Tikhonov parameter $\alpha$ is
	positive.  Then the sequence $\{u^k\}_{k\in \N}$ converges strongly
	in $L^2(0,T;\R^n)$ to the unique global minimizer of~\eqref{eq:PC}.
\end{theorem}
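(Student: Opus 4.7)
The proof breaks into three stages: compactness, feasibility of the weak limit, and upgrade to strong convergence via strong convexity. Stage~1 is straightforward: each feasible $u^k$ satisfies $u^k\in[0,1]^n$ a.e., so $\{u^k\}$ is uniformly bounded in $L^\infty(0,T;\R^n)$ and hence in $L^2(0,T;\R^n)$, and a subsequence $u^{k_j}\weak\bar u$ converges weakly in $L^2$.

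The heart of the argument is Stage~2: show $\bar u\in\overline{\conv D}^{L^p(0,T;\R^n)}$, which by Theorem~\ref{thm: convD} is equivalent to $\Pi_k(\bar u)\in C_{D,\Pi_k}$ for every~$k$. I argue by contradiction. Suppose $\Pi_{k_0}(\bar u)\notin C_{D,\Pi_{k_0}}$ for some~$k_0$; then a separating hyperplane $(a^*,b^*)\in H_{D,\Pi_{k_0}}$ produces a strict violation $\delta:=a^{*\top}\Pi_{k_0}(\bar u)-b^*>0$. The nested refinement property~(iii) allows lifting this cut to $H_{D,\Pi_k}$ for every $k\ge k_0$, preserving both the violation and the coefficient bound in $[-1,1]$. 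Because $\Pi_{k_0}$ is a fixed, weakly continuous operator on $L^2$, the lifted violation is inherited by $u^{k_j}$ for $j$ large, so the most-violated-cut rule of Step~\ref{it:step7} forces $\gamma_{k_j+1}\ge\delta/2$. Each subsequent iterate satisfies the cut $a_{k_j+1}^\top\Pi_{k_j+1}(\cdot)\le b_{k_j+1}$ added at iteration $k_j+1$, and weak continuity of the fixed operator $\Pi_{k_j+1}$ passes this inequality to $\bar u$, so subtraction gives
\[
a_{k_j+1}^\top\,\Pi_{k_j+1}\!\bigl(u^{k_j}-\bar u\bigr)\;\ge\;\tfrac{\delta}{2}\qquad\text{for infinitely many }j.
\]

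The principal obstacle is turning this separation into a contradiction. The linear functional $a_{k+1}^\top\Pi_{k+1}(\cdot)=\int_0^T\phi_{k+1}(t)\cdot(\cdot)\,\mathrm d t$ has a piecewise-constant density $\phi_{k+1}$ whose $L^2$-norm grows like $h_{k+1}^{-1}$ under Assumption~\ref{assu:rel}, so weak $L^2$-convergence alone cannot defeat it. The key tool I would bring to bear is a summability estimate: strong convexity of $f$ together with first-order optimality of $u^k$ on the feasible set of~\eqref{eq:PCk} (which contains $u^{k+1}$) yields
\[
\tfrac{\alpha}{2}\,\|u^{k+1}-u^k\|_{L^2}^2\;\le\;f(u^{k+1})-f(u^k),
\]
and the monotone boundedness $f(u^k)\le f(u^*)$ gives $\sum_k\|u^{k+1}-u^k\|_{L^2}^2<\infty$. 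Combining this with the Cauchy--Schwarz inequality $\delta/2\le\|\phi_{k_j+1}\|_{L^2}\|u^{k_j}-u^{k_j+1}\|_{L^2}$ derived from the added cut, and exploiting the interplay between the quasi-uniform refinement schedule of the partitions and the blow-up of $\|\phi_{k_j+1}\|_{L^2}$, is where the central technical work of the proof lies.

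Once feasibility of $\bar u$ is established, Stage~3 is essentially routine. Weak lower semi-continuity of $f$ and the upper bound $f(u^k)\le f(u^*)$ yield $f(u^*)\le f(\bar u)\le\liminf_j f(u^{k_j})\le f(u^*)$, so $\bar u=u^*$ by uniqueness of the minimizer of the $\alpha$-strongly convex $f$. Uniqueness of the weak limit propagates to $u^k\weak u^*$ for the full sequence, and a final application of strong convexity at $u^k$ (using that $u^*$ is feasible for~\eqref{eq:PCk}) gives $\tfrac{\alpha}{2}\|u^k-u^*\|_{L^2}^2\le f(u^*)-f(u^k)\to 0$, yielding the claimed strong $L^2$-convergence.
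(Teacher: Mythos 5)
Your Stages~1 and~3 are sound, and Stage~3 is even a nice alternative ending: the estimate $\tfrac{\alpha}{2}\|u^k-u^\star\|_{L^2}^2\le f(u^\star)-f(u^k)$, combined with the monotone convergence $f(u^k)\to f(u^\star)$, gives strong convergence of the \emph{whole} sequence more explicitly than the paper's ``well-known argument by contradiction''. The genuine gap is Stage~2, which you yourself flag as ``where the central technical work lies'' --- and the tools you propose there do not close it. From the added cut and Cauchy--Schwarz you get $\|u^{k_j}-u^{k_j+1}\|_{L^2}\ge c\,\delta\,h_{k_j+1}$ (since $\|\phi_{k_j+1}\|_{L^2}\lesssim 1/\tau_{k_j+1}\lesssim 1/h_{k_j+1}$ by quasi-uniformity), while the strong-convexity telescoping gives $\sum_k\|u^{k+1}-u^k\|_{L^2}^2<\infty$. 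These two facts are perfectly compatible: take the standard bisection refinement $h_k=T2^{-k}$, for which $\sum_k h_k^2<\infty$. So no contradiction can be extracted this way, and the same obstruction defeats the $L^1$--$L^\infty$ pairing. The deeper reason is that your unscaled lifted cut only guarantees a violation bounded below by $\delta/2$, which is too weak an input against a separating functional whose density blows up like $1/h_k$.

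The paper closes this gap with two ideas you are missing. First, it establishes \emph{strong} $L^2$-convergence of the subsequence \emph{before} proving feasibility of the limit: since $u^\star$ is feasible for every (PC$_k$) (each cut is a fixed weakly closed half-space satisfied by all later iterates), one has $f(u^{k_m})\le f(u^\star)$, weak lower semicontinuity forces $f(u^{k_m})\to f(u^\star)$, and because both summands of $f$ are weakly l.s.c.\ and $\alpha>0$, the Tikhonov term converges in norm, upgrading weak to strong convergence. Second, instead of the unscaled lift it uses the \emph{rescaled} valid inequality $\tfrac{\tau_\ell}{h_k}(\tilde a_k,\bar b)\in H_{D,\Pi_{k}}$ (still admissible since the rescaled coefficients stay in $[-1,1]$), whose violation at $u^{k_m}$ is $\tfrac{\tau_\ell}{h_{k_m+1}}\bigl(\bar a^\top\Pi_\ell(u^{k_m})-\bar b\bigr)$ and hence \emph{diverges} like $\delta/h_{k_m+1}$ if $\bar a^\top\Pi_\ell(u^\star)-\bar b=\delta>0$. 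The most-violated-cut rule then forces $a_{k_m+1}^\top\Pi_{k_m+1}(u^{k_m})-b_{k_m+1}$ to diverge at the same rate, whereas feasibility of $u^\star$ for all cuts bounds this quantity by $a_{k_m+1}^\top\Pi_{k_m+1}(u^{k_m}-u^\star)\le\tfrac{1}{\tau_{k_m+1}}\sum_j\|u_j^{k_m}-u_j^\star\|_{L^1}\le\tfrac{\kappa}{h_{k_m+1}}\sum_j\|u_j^{k_m}-u_j^\star\|_{L^1}$; after multiplying through by $h_{k_m+1}$, quasi-uniformity and the previously established strong $L^1$-convergence send the right-hand side to zero, yielding $\delta\le 0$. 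Without the early strong convergence and the $\tau_\ell/h_k$ rescaling, your contradiction argument cannot be completed.
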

\begin{proof}
	Thanks to the box constraint $u \in [0,1]^n$ a.e in $(0,T)$, the
	sequence $\{u^k\}_{k\in \N}$ is bounded in $L^\infty(0,T;\R^n)$ so
	that there exists a weakly-$\ast$ converging subsequence, denoted by $u^{k_m}\weak^* u^\star$ in
	$L^\infty(0,T;\R^n)$.  Since weak-$\ast$ convergence implies weak
	convergence in $L^p(0,T;\R^n)$ and the local averaging operators are
	clearly weakly continuous, we thus get $\Pi(u^{k_m})\to
	\Pi(u^\star)$ for $m\to\infty$ and any projection $\Pi$ occurring in
	Algorithm~\ref{alg:PC}.  Additionally, the set
	\[\{u \in L^p(0,T;\R^n): u \in [0,1]^n \text{ a.e.~in }(0,T)\}\] is
	convex and closed, hence weakly closed, and therefore
	$u^\star(t)\in[0,1]^n$ a.e. in $(0,T)$. Consequently, $u^\star$ is
	feasible for all problems~\eqref{eq:PCk}, $k\in \N$.  The optimality
	of $u^{k_m}$ for~(PC$_{k_m}$) now implies $f(u^{k_m}) \leq
	f(u^\star)$ and the weak lower semi-continuity of $f$ thus gives
	\begin{equation}\label{eq:opt}
	f(u^\star) \leq \liminf_{m\to\infty} f(u^{k_m}) \leq \limsup_{m\to\infty} f(u^{k_m}) \leq f(u^\star),
	\end{equation}
	\ie $f(u^{k_m}) \to f(u^\star)$. Since $ u \mapsto \|Su -
	y_{\textup{d}}\|_{L^2(Q)}^2$ and $u \mapsto \|u-\tfrac
	12\|_{L^2(0,T;\R^n)}^2$ are both convex and lower semi-continuous,
	thus weakly lower semi-continuous, the convergence of the objective
	and the assumption~$\alpha>0$ imply
	\[
	\|u^{k_m}-\tfrac 12\|_{L^2(0,T;\R^n)}^2 \to \|u^\star-\tfrac 12\|_{L^2(0,T;\R^n)}^2 .
	\] 
	Since weak and norm convergence in Hilbert spaces imply strong
	convergence, this gives the strong convergence of $\{u^{k_m}\}_{m\in \N}$ to $u^\star$ in
	$L^2(0,T;\R^n)$. 
	
	We next prove
	\begin{equation}\label{eq:Vl}
	u^\star \in V_\ell=\{ v \in L^p(0,T;\R^n)\colon
	\Pi_\ell(v) \in C_{D,\Pi_\ell}\} \quad \forall \ell\in \N.
	\end{equation} 
	To this end,	  
	let $\ell\in\N$ be arbitrary, but fixed, and choose 
	\[
	(\bar{a},\bar{b})\in\operatorname{argmax}_{(a,b)\in H_{D,\Pi_\ell}} (a^\top \Pi_\ell(u^\star) -b).	  
	\]
	Then we obtain for every $k \geq \ell$ and every $u\in L^p(0,T;\R^n)$ that 
	\begin{equation}\label{eq:defatilde}
	\begin{aligned}
	\bar{a}^\top \Pi_\ell(u)
	&= \sum_{j=1}^n \sum_{i = 1}^{N_\ell}
	\bar{a}_{(j-1) N_\ell + i} \,\tfrac{1}{\lambda(I_i^\ell)}\int_{I_i^\ell} u_{j}(t) \, \d t \\
	&= \sum_{j=1}^n \sum_{i=1}^{N_\ell} \bar{a}_{(j-1) N_\ell + i} \,\tfrac{1}{\lambda(I_i^\ell)} 
	\sum_{I_r^k\subseteq I_i^\ell}\int_{I_r^k} u_{j}(t) \, \d t\\ 
	&= \sum_{j=1}^n \sum_{i=1}^{N_\ell} \sum_{I_r^k\subseteq I_i^\ell} 
	\underbrace{\bar{a}_{(j-1) N_\ell + i} \,
		\tfrac{\lambda(I_r^k)}{\lambda(I_i^\ell)}}_{\displaystyle{=: (\tilde{a}_k)_{(j-1) N_k + r}}}
	\, \tfrac{1}{\lambda(I_r^k)}
	\int_{I_r^k} u_{j}(t) \, \d t 
	= \tilde{a}_k^\top \Pi_{k}(u)\;.
	\end{aligned}
	\end{equation}
	Note that the vector $\tilde a^k = ((\tilde a_k)_1, \ldots , (\tilde
	a_k)_{M_k}) \in \R^{M_k}$, $M_k = n \, N_k$, is well defined, since
	the intervals are nested by assumption (iii) in Theorem~\ref{thm: convD}.
	Thus the convergence of~$u^{k_m}$ to~$u^\star$ yields
	\begin{equation}\label{eq:cut}
	\begin{aligned}
	\bar{a}^\top \Pi_\ell(u^\star)-\bar{b}
	&=	\lim\limits_{m\to \infty} \bar{a}^\top \Pi_\ell(u^{k_m})-\bar{b}\\
	&=\lim\limits_{m\to \infty} \tilde{a}_{k_m+1}^\top \Pi_{k_m+1}(u^{k_m})-\bar{b} \\
	&=\lim\limits_{m\to \infty} \tfrac{h_{k_m+1}}{\tau_\ell}
	\left[ \tfrac{\tau_\ell}{h_{k_m+1}}\big(\tilde{a}_{k_m+1}^\top \Pi_{k_m+1}(u^{k_m})-\bar{b}\big) \right].
	\end{aligned}
	\end{equation}
	Moreover, for every $u \in D$ and every $k\geq \ell$, we deduce from
	\eqref{eq:defatilde} and $(\bar a, \bar b)\in H_{D,\Pi_\ell}$ that
	$\tilde a_k^\top\Pi_k(u) = \bar a^\top \Pi_\ell(u) \leq \bar b$,
	such that $(\tilde{a}_{k},\bar b)$ induces a valid inequality for
	$C_{D,\Pi_{k}}$.  Hence, for $k$ sufficiently large,
	$\tfrac{\tau_\ell}{h_{k}}(\tilde{a}_{k},\bar b)$ induces a valid
	inequality as well, where the coefficients satisfy
	\[
	\tfrac{\tau_\ell}{h_{k}} \, |(\tilde{a}_k)_{(j-1) N_{k} + r}|
	= \tfrac{\tau_\ell}{\lambda(I_i^\ell)} \, \tfrac{\lambda(I_r^k)}{h_{k}} \,
	|\bar{a}_{(j-1) N_\ell + i}| 
	\leq  |\bar{a}_{(j-1) N_\ell + i}| \leq 1
	\]
	for all $j = 1, \ldots, n$ and all $r = 1, \ldots, N_k$.
	Thus $\tfrac{\tau_\ell}{h_{k_m+1}}(\tilde{a}_{k_m+1},\bar
	b)\in H_{D,\Pi_{k_m+1}}$, provided that $m$ is sufficiently large,
	which in turn gives
	\[
	\tfrac{\tau_\ell}{h_{k_m+1}}\big(\tilde{a}_{k_m+1}^\top \Pi_{k_m+1}(u^{k_m})-\bar{b}\big)
	\leq a_{k_m+1}^\top \Pi_{k_m+1}(u^{k_m}) - b_{k_m+1},
	\]
	because the most violated cutting plane is chosen in Step
	\ref{it:step7} of Algorithm~\ref{alg:PC}.  Together with~\eqref{eq:cut}, the latter
	yields
	\begin{equation}\label{eq:ineq_cut}
	\begin{aligned}
	\bar{a}^\top \Pi_\ell(u^\star)-\bar{b}
	\leq \tfrac{1}{\tau_\ell}\,\liminf_{m\to \infty}\ h_{k_m+1}( a_{k_m+1}^\top \Pi_{k_m+1}(u^{k_m}) - b_{k_m+1}).
	\end{aligned}
	\end{equation}
	Since $u^\star$ is feasible for all~\eqref{eq:PCk} as seen above, we obtain for the right hand side
	\[
	\begin{aligned}
	& h_{k_m+1}\, ( a_{k_m+1}^\top \Pi_{k_m+1}(u^{k_m}) - b_{k_m+1})\\
	&\quad =  h_{k_m+1}\,(a_{k_m+1}^\top \Pi_{k_m+1}(u^\star)-b_{k_m+1})
	+ h_{k_m+1}\, a_{k_m+1}^\top\Pi_{k_m+1}(u^{k_m}-u^\star)\\
	&\quad \leq h_{k_m+1}\, a_{k_m+1}^\top\Pi_{k_m+1}(u^{k_m}-u^\star)
	\end{aligned}
	\] 
	and, since $a_{k_m+1} \in [-1,1]^{M_{k_m+1}}$, we can further estimate
	\begin{equation}\label{eq:ukmconv}
	\begin{aligned}
	& |h_{k_m+1}\, a_{k_m+1}^\top\Pi_{k_m+1}(u^{k_m}-u^\star)| \\
	&\qquad \leq h_{k_m+1} \sum_{j=1}^n \sum_{i=1}^{N_{k_m+1}} \tfrac{1}{\lambda(I_i^{k_m+1})} 
	\int_{I_i^{k_m+1}} |u^{k_m}_j - u^\star_j| \, \d t \\
	& \qquad \leq \tfrac{h_{k_m+1}}{\tau_{k_m+1}} \sum_{j=1}^n \sum_{i=1}^{N_{k_m+1}} 
	\int_{I_i^{k_m+1}} |u^{k_m}_j - u^\star_j| \, \d t \\
	&\qquad \leq \kappa \sum_{j=1}^n \| u_j^{k_m} - u^\star_j\|_{L^1(0,T)} \to 0, 
	\quad\text{as } m \to \infty,
	\end{aligned}
	\end{equation}       
	where we used Assumption~\ref{assu:rel} and the strong convergence of $u^{k_m}$ to $u^\star$.
	From~\eqref{eq:ineq_cut} we now obtain $\bar{a}^\top \Pi_\ell(u^\star)-\bar{b}\leq 0$ 
	and thus $a^\top \Pi_\ell(u^\star) -b\leq 0$ for all $(a,b)\in  H_{D,\Pi_\ell}$ 
	due to the choice $(\bar{a},\bar{b})\in\arg \max_{(a,b)\in H_{D,\Pi_\ell}} (a^\top
	\Pi_\ell(u^\star) -b)$. This gives $u^\star\in V_\ell$, as claimed. 
	
	Since $\ell\in \N$ was arbitrary, we finally arrive at
	\[
	u^\star
	\in \bigcap_{\ell\in \N} V_\ell=\overline{\conv
		D}^{L^p(0,T;\R^n)}\;,
	\]
	where the equality was shown in Theorem~\ref{thm: convD}, \ie 
	$u^\star$ is feasible for~\eqref{eq:PC}.
	To show optimality, consider any $u \in L^p(0,T;\R^n)$ feasible
	for~\eqref{eq:PC}. Then~$u$ is also feasible
	for~(PC$_{k_m}$) for every $m\in \N$, and the optimality of
	$u^{k_m}$ implies $f(u^{k_m}) \leq f(u)$. 
	Due to $f(u^{k_m})\to f(u^\star)$ by~\eqref{eq:opt}, we thus have the optimality of $u^\star$.
	
	Now, since $\alpha > 0$ by assumption,\eqref{eq:PC} is a strictly
	convex problem such that $u^\star$ is the unique global minimizer of
	\eqref{eq:PC}. A well-known argument by contradiction then shows the
	strong convergence of the whole sequence $\{u^k\}_{k\in\N}$.
\end{proof}

\begin{remark}
	An inspection of the above proof allows the following modification
	of the quasi-uniformity condition in Assumption~\ref{assu:rel}: since the
	subsequence $\{u^{k_m}\}_{m\in \N}$ is bounded in
	$L^\infty(0,T;\R^n)$, Lebesgue's dominated convergence theorem gives
	that~$u^{k_m}$ converges strongly to $u^\star$ in $L^q(0,T;\R^n)$
	for every $q<\infty$. With an estimate analogous to
	\eqref{eq:ukmconv} and H\"older's inequality, one then sees that the
	condition
	\begin{equation}\label{eq:genquasiuni}
	\sum_{i=1}^{N_k} h_k^{q'} \,\lambda(I_i^{k})^{1 - q'} \leq C < \infty
	\quad \text{for all } k \in \N
	\end{equation}
	is sufficient for the convergence result in
	\eqref{eq:ukmconv}. Herein, $q'$ is the conjugate exponent and can
	thus be chosen arbitrarily close to $1$. It is easily seen that
	Assumption~\ref{assu:rel} implies~\eqref{eq:genquasiuni}. Nevertheless, we
	decided to require the stronger Assumption~\ref{assu:rel}, since it is more
	elementary and certainly more relevant from a practical point of
	view.
\end{remark}

\section{Solution of OCP-relaxations} \label{sec: numsol}

It remains to explain how we solve the optimal control problems
\eqref{eq:PCk} appearing in the outer approximation algorithm
numerically. We first set down the KKT-condition for~\eqref{eq:PCk}.
For this purpose, we introduce the linear and continuous (and thus
Fr\'echet differentiable) operator
\[
\Psi\colon L^2(0,T;\R^n) \to L^2(0,T;H^{-1}(\Omega)), \quad 
(\Psi u) (t) = \sum_{j=1}^n u_j(t) \psi_j
\] 
as well as the solution operator $\Sigma : L^2(0,T;H^{-1}(\Omega)) \to
W(0,T)$ of the heat equation with homogeneous initial condition, i.e.,
given $w\in L^2(0,T;H^{-1}(\Omega))$, $y = \Sigma(w)$ solves
\[
\partial_t y - \Delta y = w \quad \text{in } L^2(0,T;H^{-1}(\Omega)), 
\quad y(0) = 0 \quad\text{in } L^2(\Omega).
\]
Moreover, we introduce the function $\zeta \in W(0,T)$ as solution of
\[
\partial_t \zeta - \Delta \zeta = 0 \quad \text{in } L^2(0,T;H^{-1}(\Omega)), 
\quad \zeta(0) = y_0 \quad\text{in } L^2(\Omega).
\]
The solution mapping $S\colon u \mapsto y$ in~Section~\ref{sec: pre} is then given by $ S = \Sigma \circ \Psi + \zeta$.  In the
following, we will consider $S$, $\Sigma$, and $\Psi$ with different
domains and ranges.  With a little abuse of notation, we will always
use the same symbols.

With $\Sigma$ and $\Psi$ at hand, the reduced objective in
\eqref{eq:PCk} reads
\[
f(u) = \tfrac{1}{2}\, \|\Sigma \Psi u + \zeta - y_{\textup{d}}\|_{L^2(Q)}^2 
+ \tfrac{\alpha}{2} \,\|u - \tfrac{1}{2}\|_{L^2(0,T;\R^n)}^2
\]
such that, by the chain rule, its Fr\'echet derivative at $u \in
L^2(0,T;\R^n)$ is given by
\begin{equation}\label{eq:derivobj}
f'(u) = \Psi^* \Sigma^* (\Sigma \Psi u + \zeta - y_{\textup{d}}) + \alpha (u - \tfrac{1}{2}) \in L^2(0,T;\R^n),
\end{equation}
where we identified $L^2(0,T;\R^n)$ with its dual using the Riesz
representation theorem.  By standard methods,
see \eg~\cite[Sect.~3.6]{Troe10}, one shows that the adjoint $\pi = \Sigma^* g$, for given $g\in
L^2(0,T;H^{-1}(\Omega)) \embed W(0,T)^*$,  is the
solution of the backward-in-time problem
\begin{equation}\label{eq:adjPDE}
- \partial_t \pi - \Delta \pi = g \quad \text{in } L^2(0,T;H^{-1}(\Omega)), 
\quad \pi(T) = 0 \quad\text{in } L^2(\Omega)
\end{equation}
and is therefore an element of $W(0,T)$, i.e., $\Sigma^* : L^2(0,T;H^{-1}(\Omega)) \to W(0,T)$ 
is the solution operator of~\eqref{eq:adjPDE}.
Furthermore, the adjoint of $\Psi$ is given by
\[
\begin{aligned}
& \Psi^* : L^2(0,T;H_0^1(\Omega)) \to L^2(0,T;\R^n), \\
& (\Psi^* w)(t) = \Big(\dual{\psi_j}{w(t)}_{H^{-1}(\Omega), H^1_0(\Omega)}\Big)_{j=1}^n
\quad \text{f.a.a.\ } t \in (0,T).
\end{aligned}
\]
Now we have everything at hand to apply the results of Appendix~\ref{sec: appendix} 
to obtain the following KKT conditions:

\begin{proposition}
	Denote the inequality constraints associated with the cutting planes in~\eqref{eq:PCk} by 
	$G u \leq b$ with $G: L^p(0,T;\R^n) \to \R^k$ and $b\in \R^k$.
	Assume moreover that a function $\hat u \in L^\infty(0,T;\R^n)$ and a number $\delta > 0$ exist such that 
	\begin{align}
	& \delta \leq \hat u_i(t) \leq  1 - \delta \quad \text{for all } i = 1, \ldots, n 
	\text{ and f.a.a.\ } t\in (0,T) ,\label{eq:slater1}\\
	&G\hat u \leq b. \label{eq:slater2}
	\end{align}
	Then a function $\bar u \in L^\infty(0,T;\R^n)$ with associated state $\bar y = S(\bar u) \in W(0,T)$ 
	is optimal for~\eqref{eq:PCk} if and only if Lagrange multipliers $\lambda \in \R^k$ 
	and $\mu_a, \mu_b \in L^2(0,T;\R^n)$ and an adjoint state $p\in W(0,T)$ exist such that 
	the following optimality system is fulfilled:
	\begin{gather}
	- \partial_t p - \Delta p = \bar y - y_{\textup{d}} \quad \text{in } L^2(0,T;H^{-1}(\Omega)), 
	\quad p(T) = 0 \quad\text{in } L^2(\Omega),\label{eq:adeq}\\
	\Psi^*p + \alpha\, (\bar u - \tfrac{1}{2}) + \mu_b - \mu_a 
	+G^* \lambda = 0 \quad \text{a.e.~in }(0,T), \label{eq:grad}\\
	\mu_a \geq 0, \quad\quad  \mu_a \bar u  = 0, 
	\quad\quad \bar u \geq 0 \quad \text{a.e.~in }(0,T), \label{eq:comp_ua} \\
	\mu_b \geq 0, \quad \mu_b (\bar u - 1) = 0, 
	\quad \bar u \leq 1 \quad \text{a.e.~in }(0,T),\label{eq:comp_ub}\\
	\lambda \geq 0, \quad \lambda^\top (G\bar u - b) = 0,\quad G\bar u \leq b\;. \label{eq:comp_cut}
	\end{gather}
\end{proposition}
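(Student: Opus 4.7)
The plan is to recognize~\eqref{eq:PCk} as a convex program in $L^2(0,T;\R^n)$ and to extract the KKT system via the Lagrange multiplier result provided in Section~\ref{sec: appendix}, with the hypotheses~\eqref{eq:slater1}--\eqref{eq:slater2} serving as the operative Slater-type constraint qualification.

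First, I would check that $f$ is convex and continuously Fr\'echet differentiable on $L^2(0,T;\R^n)$, with derivative~\eqref{eq:derivobj}: the map $u\mapsto \Sigma\Psi u + \zeta$ is affine continuous into $L^2(Q)$ and both quadratic terms of $J$ are convex, so $f$ is convex. The feasible set of~\eqref{eq:PCk} is the intersection of the closed convex box $\{u\in L^2(0,T;\R^n)\colon 0\le u\le 1\text{ a.e.}\}$ with the finitely many closed half-spaces $\{Gu\le b\}$, hence convex. Consequently, optimality is equivalent to first-order stationarity, and it suffices to show that $\bar u$ is optimal if and only if multipliers $\lambda$, $\mu_a$, $\mu_b$ and an adjoint state $p$ satisfying~\eqref{eq:adeq}--\eqref{eq:comp_cut} exist.

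Next, I would verify the hypotheses of the multiplier theorem from Section~\ref{sec: appendix}. The Slater point $\hat u$ lies uniformly in the interior of the box (with gap $\delta$ independent of $t$) and satisfies $G\hat u \le b$; this is precisely the qualification needed to produce nonnegative multipliers $\lambda\in\R^k$ and $\mu_a,\mu_b\in L^2(0,T;\R^n)$ together with the complementarity conditions~\eqref{eq:comp_ua}--\eqref{eq:comp_cut} and the stationarity relation
\[
f'(\bar u) + \mu_b - \mu_a + G^*\lambda = 0 \quad \text{in } L^2(0,T;\R^n).
\]
I expect the delicate point to be the $L^2$-regularity of the pointwise multipliers $\mu_a$ and $\mu_b$: a purely abstract duality argument would only yield them in the dual of $L^\infty$, possibly with a singular part, and the uniform strict-interior condition~\eqref{eq:slater1} is exactly what is used in the appendix to rule this out. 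The multiplier $\lambda$ for the finitely many linear inequalities is, by contrast, straightforward.

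Finally, to bring the stationarity condition into the stated form~\eqref{eq:grad}, I would introduce the adjoint state $p := \Sigma^*(\bar y - y_{\textup{d}})$. Since $\bar y - y_{\textup{d}} \in L^2(Q) \embed L^2(0,T;H^{-1}(\Omega))$, the discussion around~\eqref{eq:adjPDE} identifies $p\in W(0,T)$ as the unique solution of the adjoint equation~\eqref{eq:adeq}, and substituting $\Psi^*\Sigma^*(\bar y - y_{\textup{d}}) = \Psi^* p$ into the expression~\eqref{eq:derivobj} of $f'(\bar u)$ turns the stationarity relation into~\eqref{eq:grad}. Sufficiency of the KKT system is then the standard convex argument: for every feasible $u$, convexity of $f$ combined with~\eqref{eq:grad} and the sign and complementarity conditions in~\eqref{eq:comp_ua}--\eqref{eq:comp_cut} yields $f(\bar u)\le f(u)$ directly, so that $\bar u$ is a global minimizer.
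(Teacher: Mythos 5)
Your proposal is correct and follows essentially the same route as the paper: necessity is obtained by applying Theorem~\ref{thm:lagrange} from the appendix (with~\eqref{eq:slater1}--\eqref{eq:slater2} as the Slater condition), the adjoint state $p=\Sigma^*(\bar y - y_{\textup{d}})$ converts the stationarity relation into~\eqref{eq:grad} via~\eqref{eq:derivobj} and~\eqref{eq:adjPDE}, and sufficiency follows from convexity of~\eqref{eq:PCk}. Your remark that the $L^2$-regularity of $\mu_a,\mu_b$ is the delicate point, handled in the appendix by passing from $L^\infty$-dual multipliers to the pointwise positive and negative parts of $f'(\bar u)+G^*\lambda$, is exactly where that work is done in the paper.
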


\begin{proof}
	In view of~\eqref{eq:derivobj} and~\eqref{eq:adjPDE}, 
	the necessity of ~\eqref{eq:adeq}--\eqref{eq:comp_cut} immediately follows from Theorem~\ref{thm:lagrange}.
	Due to the convexity of the optimal control problem~\eqref{eq:PCk}, 
	these conditions are also sufficient for (global) optimality. 
\end{proof}
It is easily verified that the Slater condition~\eqref{eq:slater1} and
\eqref{eq:slater2} is satisfied when the switches must additionally satisfy certain combinatorial conditions at any point in time~\cite[Sect.~3.1]{partI} or in the presence of linear constraints on the switching points~\cite[Sect.~3.2]{partI}, \eg with $u\equiv 1/2$. Consequently, in most of the practically relevant classes of constraints~$D$ the Slater conditions are fulfilled.

The pointwise resp.\ componentwise complementarity systems can equivalently be expressed by 
nonlinear complementarity functions such as, \eg the $\max$- or $\min$-function, 
which leads to the following equivalent system to~\eqref{eq:grad}--\eqref{eq:comp_cut}:
\[
\begin{aligned}
&\Psi^*p + \alpha (\bar u - \tfrac{1}{2}) + G^* \lambda\\[-1ex]
&\qquad+ \min\Big(-\Psi^*p - G^* \lambda  + \tfrac{\alpha}{2}, 0\Big) \\[-1ex]
&\qquad+ \max\Big(-\Psi^*p - G^* \lambda - \tfrac{\alpha}{2}, 0\Big) = 0
\quad\text{a.e.~in } (0,T),\\
&\rho \lambda + \max(0,G\bar u+\rho \lambda - b)=0,
\end{aligned}
\]
where $\rho>0$ can be chosen arbitrarily.
Herein, we use the same symbol for the componentwise mapping $\R^k \ni v \mapsto (\max(v_i, 0))_{i=1}^k \in \R^k$ 
and the $\max$-operator in function space.
In view of $p = \Sigma^* (\Sigma\Psi \bar u + \zeta - y_{\textup{d}})$ the optimality system is thus equivalent to 
$F(\bar u, \lambda) = 0$ with $F\colon L^2(0,T;\R^n)\times \R^k \to L^2(0,T;\R^n)\times \R^k$ defined by 
\begin{equation}\label{eq:F1}
\begin{aligned}
F_1(u,\lambda) := \;
& \Psi^*\Sigma^*(\Sigma \Psi u + \zeta - y_\textup{d})+ \alpha(u - \tfrac{1}{2}) 
+  G^* \lambda \\[-1ex]
&\quad + \min\Big(-\Psi^*\Sigma^*(\Sigma\Psi u + \zeta -y_\textup{d}) - G^* \lambda
+ \tfrac{\alpha}{2},0 \Big) \\[-1ex]
&\quad + \max\Big(-\Psi^*\Sigma^*(\Sigma\Psi u + \zeta -y_\textup{d}) - G^* \lambda 
-\tfrac{\alpha}{2}, 0\Big)
\end{aligned}
\end{equation} and 
\begin{equation}
F_2(u,\lambda) = 
-\rho \lambda + \max(0,Gu+\rho \lambda - b) .\label{eq:F2}
\end{equation}
We now use the concept of semi-smoothness as developed in~\cite{CH00}, see also the work of~\cite{HIK02}, 
to solve the above optimality system by means of a semi-smooth Newton method. For this purpose, we need 
the following

\begin{assumption}\label{assu:newtondiff}
	In addition to our standing assumptions, there are exponents $q > 2$ and $0 < s < 2/q$ such that 
	the form functions satisfy $\psi_j \in H^{s}_0(\Omega)^*$, $j = 1, \ldots, n$, and the linear functionals 
	from~\eqref{eq:localaveraging}
	fulfill $\Phi_i\in L^{q'}(0,T,\R^n)^*$, $i = 1, \ldots, M$, where $q'$ is the conjugate exponent, 
	i.e., $1/q + 1/q' = 1$.
\end{assumption}
Note that this mild additional regularity assumption on the functionals $\Phi_i$ is satisfied by the local averaging operators considered throughout this paper.

\begin{lemma}\label{lem:newton}
	Under~Assumption~\ref{assu:newtondiff}, 
	the function $F$ 
	given by~\eqref{eq:F1} and~\eqref{eq:F2} is Newton (or slant) differentiable.
\end{lemma}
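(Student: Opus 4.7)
The plan is to decompose $F=(F_1,F_2)$ into compositions of simpler pieces, verify Newton (slant) differentiability of each constituent, and invoke the sum and chain rules for semi-smooth maps as developed in~\cite{CH00,HIK02}. Since bounded linear operators are Newton differentiable with their usual Fr\'echet derivative, the affine parts of both components pose no difficulty, and the work reduces to handling the pointwise $\min$/$\max$ in $F_1$ and the componentwise $\max$ in $F_2$.

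The finite-dimensional part is immediate: the map $\R^k\to\R^k$, $v\mapsto \max(0,v)$, is Lipschitz and piecewise linear, hence Newton differentiable with slant derivative $\operatorname{diag}(\chi_{\{v_i>0\}})_{i=1}^k$. Composition with the continuous linear map $(u,\lambda)\mapsto Gu+\rho\lambda-b$ from $L^2(0,T;\R^n)\times\R^k$ into $\R^k$, followed by addition of the linear term $-\rho\lambda$, yields Newton differentiability of $F_2$ via the chain rule.

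For $F_1$ the key tool is the classical \emph{norm-gap} result (cf.~\cite{HIK02}): the pointwise operator $\max(0,\cdot)\colon L^q(0,T;\R^n)\to L^2(0,T;\R^n)$ is Newton differentiable whenever $q>2$, with slant derivative given by multiplication by the indicator of the positivity set (analogously for $\min$). To apply this to the two non-smooth terms in $F_1$, it suffices to show that the argument
\[
w(u,\lambda) := -\Psi^*\Sigma^*(\Sigma\Psi u+\zeta-y_\textup{d}) - G^*\lambda \pm \tfrac{\alpha}{2}
\]
depends continuously and linearly on $(u,\lambda)\in L^2(0,T;\R^n)\times\R^k$ with values in some $L^q(0,T;\R^n)$ for $q>2$. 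The chain rule and the addition of the remaining linear terms in~\eqref{eq:F1} then deliver Newton differentiability of $F_1$.

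Establishing this improved integrability is the main obstacle and is precisely where Assumption~\ref{assu:newtondiff} enters. For $u\in L^2(0,T;\R^n)$, the adjoint state $p=\Sigma^*(\Sigma\Psi u+\zeta-y_\textup{d})$ lies in $W(0,T)\embed L^\infty(0,T;L^2(\Omega))\cap L^2(0,T;H^1_0(\Omega))$. Standard interpolation of these two embeddings yields $p\in L^q(0,T;H^{2/q}_0(\Omega))$, which embeds into $L^q(0,T;H^s_0(\Omega))$ thanks to $s<2/q$. The assumption $\psi_j\in H^s_0(\Omega)^*$ therefore gives $\Psi^*p\in L^q(0,T;\R^n)$ with continuous dependence on $u$. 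For the multiplier term, identifying $L^{q'}(0,T;\R^n)^*$ with $L^q(0,T;\R^n)$ by reflexivity, the condition $\Phi_i\in L^{q'}(0,T;\R^n)^*$ means $G^*\colon\R^k\to L^q(0,T;\R^n)$ is continuous. Hence $w\in L^q(0,T;\R^n)$ with $q>2$, and the norm-gap argument above completes the proof.
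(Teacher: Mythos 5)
Your proof is correct and follows essentially the same architecture as the paper's: $F_2$ via the finite-dimensional Newton differentiability of $\max$ and the chain rule, and $F_1$ via the norm-gap result of \cite[Prop.~4.1(ii)]{HIK02} after showing that the argument of the $\min$/$\max$ terms depends affinely and continuously on $(u,\lambda)$ with values in $L^q(0,T;\R^n)$ for some $q>2$. The one place where you genuinely diverge is the derivation of $p\in L^q(0,T;H^s_0(\Omega))$: the paper invokes the embedding $W(0,T)\embed L^q(0,T;(H^{-1}(\Omega),H^1_0(\Omega))_{\theta,1})$ together with the identification of the interpolation space as $H^{2\theta-1}_0(\Omega)$, whereas you interpolate between $W(0,T)\embed L^\infty(0,T;L^2(\Omega))$ and $W(0,T)\subset L^2(0,T;H^1_0(\Omega))$ using the elementary inequality $\|v\|_{H^\theta}\lesssim\|v\|_{L^2}^{1-\theta}\|v\|_{H^1}^{\theta}$ with $\theta=2/q$; both routes give exactly the restriction $s<2/q$ of Assumption~\ref{assu:newtondiff}, and yours is arguably the more elementary, avoiding real interpolation spaces altogether (the paper's Radon--Nikod\'ym step for identifying $\big(L^{q'}(0,T;H^s_0(\Omega)^*)\big)^*$ is subsumed in your direct pointwise estimate $|\langle\psi_j,p(t)\rangle|\le\|\psi_j\|_{H^s_0(\Omega)^*}\|p(t)\|_{H^s_0(\Omega)}$).
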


\begin{proof}
	The proof is standard, but for convenience of the reader, we sketch the arguments.
	The operator $\Pi$ is linear and continuous with respect to $u$ such that 
	\[
	L^2(0,T;\R^n)\times \R^k \ni (u,\lambda) \mapsto G u+\rho \lambda - b \in \R^k
	\]
	is continuously Fr\'echet differentiable. 
	Exploiting the chain rule~\cite[Lemma~8.15]{IK08} and the Newton differentiability of 
	$\R^k \ni w \mapsto \max(0,w)\in\R^k$~\cite[Lemma~3.1]{HIK02}, 
	we have that the second component~$F_2$ is Newton differentiable. 
	
	Furthermore, according to~\cite[Prop.~4.1(ii)]{HIK02}, the mapping $v \mapsto \max(0,v)$ 
	is Newton differentiable from $L^s(0,T;\R^n)$ to $ L^r(0,T;\R^n)$ for $1\leq r < s \leq \infty$. 
	We obtain the required norm gap with $s = q$ and $r = 2$ by utilizing the smoothing properties of the 
	PDE solution operators $\Sigma$ and $\Sigma^*$, respectively. 
	For all $\theta$ satisfying $0 < \theta - 1/2 < 1/q$, there holds 
	\[
	W(0,T) \embed L^q(0,T; (H^{-1}(\Omega), H^1_0(\Omega))_{\theta, 1}),
	\]
	where $(H^{-1}(\Omega), H^1_0(\Omega))_{\theta, 1}$ denotes the real interpolation space, 
	see \eg~\cite[Sect.~1]{AMA05}. For the latter,~\cite[Chap.~4.6.1]{TRI78} yields
	\[
	(H^{-1}(\Omega), H^1_0(\Omega))_{\theta, 1} 
	\embed [H^{-1}(\Omega), H^1_0(\Omega)]_{\theta}
	= H_0^{2\theta - 1}(\Omega).
	\]
	Consequently, if we now choose $\theta = 1/2(s+1)$ (which is feasible due to our assumptions on~$s$), then 
	$\Sigma$ and $\Sigma^*$ map $L^2(0,T;H^{-1}(\Omega))$ linearly and continuously into 
	$L^q(0,T; H_0^s(\Omega))$.    
	
	According to Assumption~\ref{assu:newtondiff}, $\Psi\colon v \mapsto \sum_{j=1}^n v_j \psi_j$ maps 
	$L^{q'}(0,T;\R^n)$ linearly and continuously to $L^{q'}(0,T;H^s_0(\Omega)^*)$. Thus, 
	the Radon-Nikod\'ym property of $H^s_0(\Omega)$ implies    
	\[
	\Psi^* : L^q(0,T;H^s_0(\Omega)) = \big(L^{q'}(0,T;H^s_0(\Omega)^*)\big)^*
	\to L^q(0,T;\R^n), 
	\]
	and therefore
	\[
	L^2(0,T;\R^n) \ni u \mapsto \Psi^* \Sigma^*(\Sigma\Psi u) + \zeta - y_\textup{d}) \in 
	L^q(0,T; \R^n)
	\] 
	is affine and continuous and hence continuously Fr\'echet differentiable.  
	Moreover, if we identify $\Phi^\ell_i \in L^{q'}(0,T;\R^n)^*$, $i=1, \ldots, M_\ell$,  for a projection $\Pi_\ell$ occurring in~\eqref{eq:PCk} with its Riesz representative, denoted by the same 
	symbol, then its adjoint operator~$\Pi_\ell^*$ is given by $\R^{M_\ell} \ni v \mapsto \sum_{i=1}^{M_\ell} v_i \Phi^\ell_i \in  L^{q'}(0,T;\R^n)^*$, such that $G^* \lambda$ is given as
	\[
	G^* \lambda=\sum_{\ell=1}^k \sum_{i=1}^{M_\ell} \lambda_\ell\,a_i^\ell\Phi_i^\ell \in  L^{q'}(0,T;\R^n)^* \cong L^{q}(0,T;\R^n)
	\] and 
	\[
	\R^k \ni \lambda \mapsto G^* \lambda \in L^q(0,T;\R^n)
	\] is linear and continuous, too. 
	Hence, owing to the Newton differentiability of $\max$ and the chain rule,     
	$F_1$ is also Newton differentiable. 
\end{proof}
Now, as $F$ is Newton differentiable, we choose 
\begin{equation}\label{eq:subdiff}
H_m(\delta u, \delta \lambda) := 
\begin{pmatrix}
\chi_{\II_m}\Psi^*\Sigma^*\Sigma \Psi \delta u+ \alpha\, \delta u 
+ \chi_{\II_m}G^* \delta \lambda \\[1ex]
-\rho\,\chi_{\NN_m}\delta\lambda + \chi_{\BB_m}G\delta u
\end{pmatrix}
\end{equation}
as a generalized derivative of $F$ at a given iterate $z^m:=(u^m,\lambda^m)$ 
with the active and inactive sets for the box constraints defined (up to sets of zero Lebesgue measure) by
\[
\begin{aligned}
\AA_m^+&:=\left\{(t,j)\in(0,T)\times\{1,\ldots,n\}: 
-(\Psi^* p^m)(t)_j - (G^* \lambda)(t)_j-\tfrac{\alpha}{2} > 0\right\}, \\
\AA_m^-&:=\left\{(t,j)\in(0,T)\times\{1,\ldots,n\}: 
-(\Psi^* p^m)(t)_j -   (G^* \lambda)(t)_j + \tfrac{\alpha}{2} < 0\right\}, \\
\II_m&:=(0,T)\times\{1,\ldots,n\}\setminus\{\AA_m^+\cup \AA_m^-\},
\end{aligned}
\] 
where $p^m:=\Sigma^*(\Sigma\Psi u^m + \zeta -y_\textup{d})$, and the active and inactive cutting planes 
\[
\begin{aligned}
\BB_m&:=\{i\in\{1,\ldots,k\}:(Gu^m)_{\,i}+\rho \lambda_i^m > b_i\}, \\ 
\NN_m &:=\{1,\ldots,k\}\setminus\BB_m. 
\end{aligned}
\]
Moreover, by $\chi_{\II_m}, \chi_{\AA_m^\pm} : L^2(0,T;\R^n) \to L^2(0,T;\R^n)$ and 
$\chi_{\NN_m}, \chi_{\BB_m} : \R^k \to \R^k$, we denote the respective characteristic functions.

To compute the next iterate, we solve the following semi-smooth Newton equation 
\begin{equation}\label{eq:newton}
H_m(z^{m+1}-z^m) = -F(z^m).
\end{equation}

For the sake of simplicity, we omit the index $m$ at the inactive and active sets in the following. 
By definition of the active sets, the restriction of the first block in~\eqref{eq:newton} to $\AA^+$ and $\AA^-$,
respectively, yields 
\[
u^{m+1}=1 \quad \text{a.e.~in } \AA^+
\quad \text{and} \quad 
u^{m+1}=0\quad \text{a.e.~in } \AA^-
\] 
and the second block of~\eqref{eq:newton} restricted to $\NN$ implies $\lambda_{\ | \cal N}^{m+1}=0$. 
Therefore, we can restrict the semi-smooth Newton equation~\eqref{eq:newton} to the active components 
$\lambda^{m+1}_{\ | \BB}$ and the inactive part of the optimal control 
$u^{m+1}_{\ | \II} \in L^2(\II;\R^n)$ only, which leads to
\begin{equation}\label{eq:rnewton}
\begin{aligned}
& (\alpha I + \Psi^*\Sigma^*\Sigma\Psi\, \chi_\II^*)u^{m+1}_{\ | \II} 
+ G^* \chi_\BB^*\lambda^{m+1}_{\ | \BB}\\[-1ex]
&\qquad\qquad\qquad = \Psi^*\Sigma^*\big(y_\textup{d}  - \Sigma \Psi \chi_{\AA^+}^*\,u^{m+1}_{\ |\AA^+}
- \zeta\big) + \frac{\alpha}{2}
\quad\text{a.e.~in } \II
\end{aligned}
\end{equation}
and 
\begin{equation}\label{eq:rnewton2}
\big(G\chi_\II^* u^{m+1}_{\ | \II} \big)_\BB
= b_{\BB}-\big(G\chi_{\AA^+}^*\, u^{m+1}_{\ |\AA^+}\big)_\BB\;.
\end{equation}
Note that $\chi_\II^*$ and $\chi_{\AA^+}^*$
are the extension-by-zero operators mapping from $L^2(\II;\R^n)$ and 
$L^2(\AA^+;\R^n)$, respectively, to $L^2(0,T;\R^n)$, while
$(Gu)_\BB$ denotes the restriction to indices in~$\BB$. 
The semi-smooth Newton algorithm is now given as follows. 
\begin{algorithm}[H]
	\caption{Semi-smooth Newton method for~\eqref{eq:PCk}}\label{alg:snewton}
	\begin{algorithmic}[1]
		\STATE Choose $(u^0,\lambda^0)\in L^2(0,T;\R^n)\times\R^k$, set $
		\AA^+=\AA^-=\BB=\emptyset$ and $m=0$.
		\STATE{\label{it:nstep2} Update the active and inactive sets $\II_{m},\, 
			\AA_{m}^+,\, \AA_{m}^-,\, \BB_m$ and $\NN_m$.}
		\IF{ $\AA_{m}^+=\AA^+ \ \land \ \AA_{m}^-= \AA^-\ \land \ \BB_{m}=\BB\ \land \ m > 0$}
		\RETURN $(u^m,\lambda^m)$.
		\ELSE
		\STATE Compute $(u^{m+1},\lambda^{m+1})$ by solving the linear system~\eqref{eq:rnewton} and~\eqref{eq:rnewton2}. \label{it:nstep6}
		\STATE Set $\AA^+=\AA_{m}^+$,  $\AA^-=\AA_{m}^-$, $\BB=\BB_{m}$ and  $m=m+1$, return to \ref{it:nstep2}.
		\ENDIF
	\end{algorithmic}
\end{algorithm}

It is well known (see, \eg~\cite[Chap.~8]{IK08}) that the algorithm converges locally superlinearly if all generalized derivatives appearing in the iteration are
continuously invertible and their inverses admit a common uniform
bound. In our
case however, it is very likely that $G$ becomes rank deficient if
the number $k$ of cutting planes is large, such that the system
\eqref{eq:rnewton}--\eqref{eq:rnewton2} is no longer uniquely
solvable. In our numerical experiments, however, a moderate number of
cutting planes always sufficed and the semi-smooth Newton equation in
Step~\ref{it:nstep6} of the algorithm always admitted a unique solution for sufficiently large $\alpha$. In the case that $\alpha>0$ is small, one can only expect local superlinear convergence of the algorithm and no longer global convergence, this was also observed in our numerical experiments and a globalization would be needed for such instances. 

After each iteration of the outer approximation algorithm presented in the previous section, one has to solve a parabolic control problem~\eqref{eq:PCk} with an additional cutting plane by Algorithm~\ref{alg:snewton}. Due to this iterative structure, it is crucial to speed up the algorithm by reoptimization. More precisely, we exploit the solution of the prior outer approximation iteration to initialize the active and inactive sets in Algorithm~\ref{alg:snewton}.

The value of the Tikhonov parameter is crucial for the performance of numerical methods for the solution of optimal control problems, as already indicated above. This concerns discretization error estimates as well as convergence of optimization algorithms and conditioning of linear systems of equations arising in the latter. In case of~\eqref{eq:optprob} however, the choice of~$\alpha$ has no impact on the set of minimizers, as $u \in \{0,1\}^n \text{ a.e.~in } (0,T)$ and hence the Tikhonov term is constant. However, the convex relaxations of~\eqref{eq:optprob} considered in this paper as well their optimal values are
influenced by~$\alpha$. Thus, in order to improve the performance of Algorithm~\ref{alg:snewton}, a large value of $\alpha$ is favorable, but we expect that the quality of the dual bounds in a branch-and-bound framework will become worse for larger values of~$\alpha$. A detailed investigation of this interplay is subject to future research.

\section{Performance of the algorithm} \label{sec: casestudy}

We test the potential of our approach presented in the previous
sections by an experimental study. For this, we concentrate on the
case of a single switch with an upper bound~$\smax$ on the number of
switchings, \ie we consider
\[
D := \big\{ u \in BV(0,T)\colon \; u(t) \in \{0,1\} \text{ f.a.a.\ } t \in (0,T),\; |u|_{BV(0,T)} \leq \smax \big\}.
\]
However, we assume that $u$ is fixed to zero before the time horizon,
so that we count it as a shift if $u$ is $1$ at the beginning. The most
violated inequality for a $\Pi(u)\notin C_{D,\Pi}$, needed in
Step~\ref{it:step7} of Algorithm~\ref{alg:PC}, can then be computed in time $O(M)$~\cite{BH23}. This is fast enough to allow choosing as
intervals~$I_1,\dots,I_M$ for the projection exactly the ones given by
the discretization in time. In particular, we do not need to refine
the intervals in the course of the outer approximation algorithm.
For given~$
w\in C_{D,\Pi}$, we thus compute the most violated inequality of the form
\begin{equation*}\label{eq:alt}
\sum_{j=1}^m (-1)^{j+1} w_{i_j} \leq \Big\lfloor \frac{\smax}{2}
\Big \rfloor\;,
\end{equation*} 
where~$i_1,\dots,i_m\in \{2,\dots,M\}$ is an increasing sequence of
indices with $m-\smax$ odd and~$m>\smax$,
by choosing~$\{i_1,i_3,\dots\}$ as the local maxima
of~$w$ and~$\{i_2,i_4,\dots\}$ as the local minima of~$w$ (excluding~$1$).

The outer approximation algorithm devised in Section~\ref{sec: outerapprox} is implemented in C++, using the \textsc{DUNE}-library~\cite{SAN21} for the discretization of the PDE. The source code can be downloaded at~\url{https://github.com/agruetering/dune-MIOCP}. 
The spatial discretization uses a standard Galerkin method with continuous and piecewise linear functionals. For the state $y$ and the desired temperature $y_\textup{d}$ we also use continuous and piecewise linear functionals in time, while the temporal discretization for the controls chooses piecewise constant functionals. The resulting linear,  symmetric systems~\eqref{eq:rnewton} and~\eqref{eq:rnewton2} in each semi-smooth Newton iteration are solved by the minimum residual solver \textsc{Min-Res}~\cite{GREEN97} equipped with a suitable scalar product, induced by the temporal mass matrix, reflecting the norm of $L^2(0,T,\R^n)$ and the Euclidean scalar product in~$\R^k$,  and preconditioned with $$P=\begin{pmatrix} \alpha I & 0 \\ 0 &\tfrac{1}{\alpha} GG^\star  \end{pmatrix}\;.$$  Hereby, we approximate the spatial integrals in the weak formulation of the state and adjoint equation, respectively, by applying a Gauss-Legrendre rule with order~$3$.
The discrete systems, arising by the discretization of the state and adjoint equation, are solved by a sequential conjugate gradient solver preconditioned with AMG smoothed by SSOR.

We consider exemplary the square domain $\Omega=[0,1]^2$, the end time
$T=2$ and the form function~$\psi(x)=1.5-2(x_1-0.5)^2-2(x_2-0.5)^2.$
Moreover, in order to produce challenging instances, we generate a control~$u_\textup{d}\colon [0,T] \to [0,1]$
with a total variation~$|u_\textup{d}|_{BV(0,T)}\gg\smax$ and choose the desired
state~$y_\textup{d}$ in such a way that $u_\textup{d}$ is the optimal
solution of our relaxation as long as no cutting planes are
added. More specifically, we randomly choose $\sigma=11$ jump points~$0<
t_1< t_2 < \cdots <t_\sigma < T$ on the time grid. Then, we choose
$u_\textup{d}\colon[0,T]\to [0,1]$ as cubic spline on
$[t_{i-1},t_{i}]$, for~$1\leq i \leq \sigma+1$, where~$t_0:=0$ and
$t_{\sigma+1}:=T$, with $u_\textup{d}(t_0)=0$ and
$u_\textup{d}(t_{\sigma+1})=0.5$.  The latter condition
guarantees~$p^\star(T)=0$ for the adjoint state
\[p^\star(t,x)=-\alpha c (u_\textup{d}(t)-\tfrac{1}{2})\sin(\pi
x_1)\sin(\pi x_2),\] where $c$ is the inverse of the value
$\int_\Omega \psi(x)\sin(\pi x_1)\sin(\pi x_2) \, \d x$ and $\alpha$ is
the Tikhonov parameter. By setting
\[y_\textup{d}(t,x):=S(u_\textup{d})+\partial_t p^\star(t,x) + \Delta
p^\star(t,x),\] the optimal solution of our relaxation without cutting
planes is given as $u^\star=u_\textup{d}$ and~$p^\star$ represents the
optimal adjoint state. In the generation of the instance, we compute~$S(u_\textup{d})$ on a time
grid with $N_t=400$ time intervals, whereas
the outer approximation is performed on a coarser grid.

In all experiments, we use a uniform spatial
triangulation of~$\Omega$ with $30\times 30$ nodes, while
experimenting with different temporal resolutions.
The Tikhonov parameter was always set to~$\alpha=10^{-2}$. For the update of active cutting planes we
chose~$\rho=10^{-5}$; see Section~\ref{sec: numsol}. The cutting plane
algorithm stops as soon as the violation of the most violated cutting plane falls below~$1\%$ of the right hand side, the
control is considered feasible for~\eqref{eq:PC} in this case. Note that the validity of the
lower bound is not compromised by this.

All computations have been performed on a 64bit Linux system with an
Intel Xeon E5-2640 CPU @ 2.5 GHz and $32$ GB RAM.

We first illustrate the development of lower bounds over time; see
Figure~\ref{fig:time}. Here, we used a typical instance with~$\smax=2$
and a time grid with~$N_t=100$ intervals. Each cross
corresponds to the lower bound (y-axis) obtained after adding another
cutting plane, where the x-axis represents the time needed (in
CPU hours) to obtain this bound. It can be seen that the bounds improve
very quickly in the first cutting plane iterations and then continue
to increase slowly. When using the lower bounds within a
branch-and-bound scheme, this suggests to generate only few cutting
planes before resorting to branching. For comparison, we also show the
development of lower bounds in case no reoptimization is used; this is
marked by circles. It can be observed that reoptimization significantly
decreases running times.

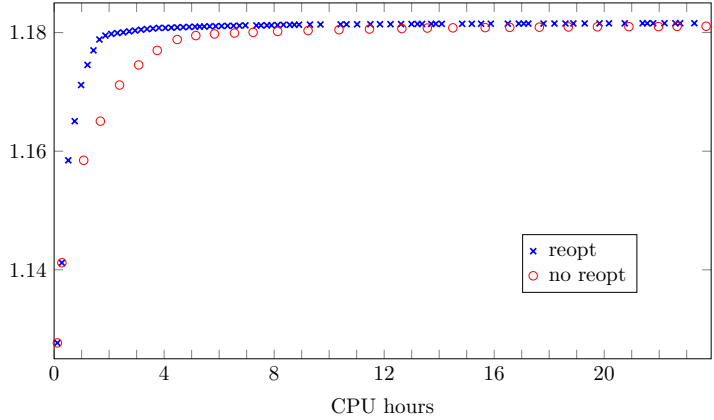
\begin{figure}[htb]
	\begin{center}
		\begin{tikzpicture}[scale=0.8]
		\begin{axis}[
		width=12.5cm,
		height=7.5cm,
		xmin=-0.2,
		xmax=86000,
		ymin=0.01125,
		ymax=0.01185,
		xtick={0, 3600, 7200,10800,14400,18000, 21600, 25200,28800,32400,36000,39600, 43200,46800, 50400, 54000, 57600, 61200, 64800, 68400, 72000, 75600, 79200,82800, 86400},
		xticklabels={0,$ $, $ $,$ $, 4, $ $, $ $, $ $, 8, $ $, $ $, $ $, 12, $ $, $ $, $ $, 16, $ $, $ $, $ $, 20, $ $, $ $, $ $, 24},
		xlabel={CPU hours},
		xtick scale label code/.code={},
		ytick scale label code/.code={},
		every y tick label/.append style={ /pgf/number format/.cd,precision=2,fixed zerofill}, 
		legend style={at={(0.8,0.35)},anchor=north},
		legend cell align={left}
		]
		\addplot[only marks, mark=x, thick, color=blue] coordinates {
			(419.01,0.01127681228)
			(1017.69,0.0114118665)
			(1856.04,0.01158463402)
			(2695.39,0.01165057301)
			(3533.95,0.01171155968)
			(4371.83,0.01174543186)
			(5151.12,0.01176999434)
			(5929.65,0.01178831773)
			(6709.03,0.01179491041)
			(7487.15,0.0117976612)
			(8266.58,0.01179920585)
			(9044.79,0.01180031455)
			(9823.95,0.01180205365)
			(10603.18,0.01180337827)
			(11381.87,0.01180481165)
			(12160.39,0.01180594658)
			(12939.38,0.01180680686)
			(13717.98,0.01180791039)
			(14556.02,0.01180783353)
			(15335.28,0.01180826737)
			(16115.14,0.01180874175)
			(16893.8,0.0118091338)
			(17673.18,0.01180956081)
			(18453.1,0.0118097421)
			(19051.56,0.01180998336)
			(19650.29,0.01181022889)
			(20369.53,0.0118105064)
			(21147.31,0.01181074958)
			(21926.36,0.01181102589)
			(22704.65,0.01181130672)
			(23483.08,0.01181140672)
			(24261.31,0.01181160064)
			(25039.97,0.01181216667)
			(26478.24,0.01181198333)
			(27077.03,0.01181212127)
			(27855.73,0.01181221504)
			(28454.73,0.01181235245)
			(29233.94,0.0118126086)
			(30012.94,0.01181309437)
			(30852.39,0.01181304717)
			(31450.47,0.01181314419)
			(32049.52,0.01181327658)
			(33488.9,0.01181344812)
			(34867.2,0.01181360361)
			(37500.87,0.01181376461)
			(38279.67,0.01181393302)
			(39657.61,0.01181396363)
			(41394.86,0.01181402938)
			(42652.76,0.0118140814)
			(44031.3,0.01181414002)
			(45408.8,0.01181426897)
			(46784.77,0.0118144051)
			(47563.35,0.01181445085)
			(48162.18,0.01181453019)
			(49361.76,0.01181458085)
			(49960.92,0.01181465704)
			(50742.28,0.01181466944)
			(53411.08,0.0118147182)
			(54676.44,0.01181478046)
			(55887.55,0.01181484058)
			(57156.28,0.01181490844)
			(59382.89,0.01181500305)
			(60821.99,0.01181508871)
			(61480.37,0.0118151318)
			(62078.93,0.01181518931)
			(64054.42,0.01181517619)
			(65491.3,0.01181524467)
			(66927.95,0.01181530431)
			(67962.38,0.01181535383)
			(69448.29,0.01181542794)
			(71385.26,0.01181549083)
			(72622.59,0.01181555477)
			(74689.28,0.01181561373)
			(77030.48,0.01181563146)
			(77630.91,0.01181565788)
			(78411.48,0.01181568856)
			(79894.4,0.01181572232)
			(81318.53,0.01181573849)
			(81938.61,0.01181575885)
			(83797.74,0.01181578605)};
		\addlegendentry{~reopt};
		\addplot[only marks, mark=o, color=red] coordinates {
			(420.9,0.01127681228)
			(1015.53,0.0114118665)
			(3871.15,0.01158463402)
			(6053.11,0.01165057301)
			(8565.2,0.01171155968)
			(11065.13,0.01174543186)
			(13498.52,0.01176999433)
			(16107.62,0.01178831773)
			(18548.45,0.0117949104)
			(20986.2,0.01179766118)
			(23605.49,0.01179920585)
			(26046.1,0.01180031453)
			(29258.9,0.01180205366)
			(33241.91,0.01180337827)
			(37285.9,0.01180481166)
			(41279.49,0.01180594658)
			(45512.73,0.01180680686)
			(48849.42,0.01180760105)
			(52188.89,0.0118079589)
			(56422.1,0.01180833598)
			(59640.19,0.01180873772)
			(63510.86,0.01180910777)
			(67310.86,0.01180940777)
			(71110.69,0.01180968637)
			(75232.34,0.01180992826)
			(79103.19,0.01181008954)
			(81538.67,0.01181036491)
			(85336.05,0.01181071575)};
		\addlegendentry{~no reopt};
		\end{axis}
		\end{tikzpicture}
		\caption{Temporal development of bounds.}\label{fig:time}
	\end{center}
\end{figure}

The Tikhonov term has an impact on the performance of Algorithm~\ref{alg:PC}, as well as on the quality of the bounds. The larger $ \alpha$ is, the worse the bounds become, but the faster the convex relaxations can be solved. This can also be observed in Figure~\ref{fig:alpha}, where we solved the same instance of Figure~\ref{fig:time}, \ie we used the same desired state~$y_\textup{d}$, with different Tikhonov parameters. The results show that choosing a small $\alpha$ is generally favorable, but for very small $\alpha$ there occurs a trade-off between the quality  of the dual bounds and the convergence rate. Within a branch-and-bound framework, one needs to empirically investigate whether a good quality or a quick computation of the dual bounds for small $\alpha$ have a greater influence on the overall performance.
\begin{figure}[htb]
	\centering
	\begin{tikzpicture}[scale=0.8]
	\begin{axis}[
	width=12.5cm,
	height=9cm,
	xmin=-0.2,
	xmax=86000,
	ymin=0.0112,
	ymax=0.0131,
	xtick={0, 3600, 7200,10800,14400,18000, 21600, 25200,28800,32400,36000,39600, 43200,46800, 50400, 54000, 57600, 61200, 64800, 68400, 72000, 75600, 79200,82800, 86400},
	xticklabels={0,$ $, $ $,$ $, 4, $ $, $ $, $ $, 8, $ $, $ $, $ $, 12, $ $, $ $, $ $, 16, $ $, $ $, $ $, 20, $ $, $ $, $ $, 24},
	xlabel={CPU hours},
	xtick scale label code/.code={},
	ytick scale label code/.code={},
	every y tick label/.append style={ /pgf/number format/.cd,precision=2, fixed zerofill}, 
	legend style={at={(0.85,0.7)},anchor=north},
	legend cell align={left}
	]
	\addplot[only marks, mark=asterisk, cyan!50!black, mark size=2pt,thick] coordinates {
		(1379.42,0.011983257184)
		(4218.01,0.01209474931)
		(8216.51,0.0121953626)
		(11422.53,0.01229167796)
		(14569.13,0.01234396277)
		(18015.1,0.01239531646)
		(21584.15,0.0124399101)
		(25334.57,0.01247386242)
		(29933.74,0.01252030212)
		(34289.14,0.01256261253)
		(37741.48,0.01259395046)
		(41434.81,0.01262352212)
		(45372.64,0.01265621933)
		(48803.13,0.01268001014)
		(56097.2,0.01271602994)
		(59525.78,0.01274315026)
		(64102.43,0.01277803161)
		(70515.44,0.01280572117)
		(77133.86,0.01283407706)
		(80967.26,0.01286479576)
		(85580.23,0.01291454032)};
	\addlegendentry{~$\alpha=0.002$};
	\addplot[only marks, mark=triangle*, black, mark size=2pt] coordinates {
		(1331.64,0.0119471935)
		(3762.94,0.01208586696)
		(5769.92,0.01224046891)
		(8835.25,0.01235346369)
		(11983.33,0.01240912213)
		(14776.4,0.01244120975)
		(18437.49,0.01249026204)
		(21599.02,0.01251965443)
		(24819.89,0.01255531812)
		(27793.38,0.01258958732)
		(29974.71,0.01261474418)
		(32216.56,0.01263541718)
		(35425.2,0.01267197606)
		(37669.14,0.01269271529)
		(41068.6,0.01271614425)
		(45934.06,0.01275048464)
		(49386.44,0.0127715442)
		(52969.4,0.01280128057)
		(56603.8,0.01285342078)
		(59090.27,0.01287563025)
		(62383.54,0.01289924122)
		(65356.79,0.01291425369)
		(68465.16,0.01292788749)
		(71887.7,0.01294075016)
		(75009.78,0.01294993183)
		(77379.01,0.01295951629)
		(79671.36,0.01296697376)
		(81767.64,0.01297667166)
		(83989.94,0.01298444932)
		(87196.63,0.01299222541)}; 
	\addlegendentry{~$\alpha=0.003$};
	\addplot[only marks, mark=o, red, mark size=2pt] coordinates {
		(1172.87,0.01183193503)
		(3277.42,0.01199846151)
		(5196.89,0.01218300478)
		(7847.02,0.01228193816)
		(10565.96,0.01236339247)
		(13657.19,0.01241696139)
		(15731.84,0.01246407393)
		(17894.67,0.01250500287)
		(20053.57,0.01252881036)
		(22164.51,0.01254919467)
		(24975.84,0.01257204768)
		(27765.71,0.01258935575)
		(30662.67,0.0126096747)
		(32704.79,0.01262804841)
		(34746.89,0.01264243697)
		(36895.89,0.01265704602)
		(38926.92,0.01266651921)
		(42020.19,0.01267533298)
		(43071,0.0126807832)
		(44105.81,0.01268777461)
		(46064.95,0.01269582944)
		(47107.64,0.012701732)
		(48160.37,0.01270770473)
		(51172.8,0.01271271684)
		(52202.8,0.01271705257)
		(54268.16,0.01272037406)
		(55238.77,0.01272359152)
		(56209.68,0.01272905165)
		(57968.42,0.01273331865)
		(59545.44,0.01273824895)
		(60518.9,0.01274191275)
		(62944.86,0.0127464956)
		(63915.08,0.01275094197)
		(64947.12,0.01275312368)
		(65918.12,0.01275623446)
		(66887.55,0.01275866189)
		(68698.82,0.01276051908)
		(69657.74,0.01276398123)
		(71456.86,0.01276732977)
		(72416.48,0.01276976975)
		(74215.03,0.01277207988)
		(75174.76,0.01277489797)
		(76794.42,0.01277696726)
		(77752.95,0.0127792644)
		(79310.31,0.01278145172)
		(82247.85,0.01278446185)
		(84045.71,0.01278673449)
		(85004,0.01279036863)
		(86800.63,0.01279256298)};
	\addlegendentry{~$\alpha=0.005$};
	\addplot[only marks, mark=x, blue,mark size=2pt,thick] coordinates {
		(419.01,0.01127681228)
		(1017.69,0.0114118665)
		(1856.04,0.01158463402)
		(2695.39,0.01165057301)
		(3533.95,0.01171155968)
		(4371.83,0.01174543186)
		(5151.12,0.01176999434)
		(5929.65,0.01178831773)
		(6709.03,0.01179491041)
		(7487.15,0.0117976612)
		(8266.58,0.01179920585)
		(9044.79,0.01180031455)
		(9823.95,0.01180205365)
		(10603.18,0.01180337827)
		(11381.87,0.01180481165)
		(12160.39,0.01180594658)
		(12939.38,0.01180680686)
		(13717.98,0.01180791039)
		(14556.02,0.01180783353)
		(15335.28,0.01180826737)
		(16115.14,0.01180874175)
		(16893.8,0.0118091338)
		(17673.18,0.01180956081)
		(18453.1,0.0118097421)
		(19051.56,0.01180998336)
		(19650.29,0.01181022889)
		(20369.53,0.0118105064)
		(21147.31,0.01181074958)
		(21926.36,0.01181102589)
		(22704.65,0.01181130672)
		(23483.08,0.01181140672)
		(24261.31,0.01181160064)
		(25039.97,0.01181216667)
		(26478.24,0.01181198333)
		(27077.03,0.01181212127)
		(27855.73,0.01181221504)
		(28454.73,0.01181235245)
		(29233.94,0.0118126086)
		(30012.94,0.01181309437)
		(30852.39,0.01181304717)
		(31450.47,0.01181314419)
		(32049.52,0.01181327658)
		(33488.9,0.01181344812)
		(34867.2,0.01181360361)
		(37500.87,0.01181376461)
		(38279.67,0.01181393302)
		(39657.61,0.01181396363)
		(41394.86,0.01181402938)
		(42652.76,0.0118140814)
		(44031.3,0.01181414002)
		(45408.8,0.01181426897)
		(46784.77,0.0118144051)
		(47563.35,0.01181445085)
		(48162.18,0.01181453019)
		(49361.76,0.01181458085)
		(49960.92,0.01181465704)
		(50742.28,0.01181466944)
		(53411.08,0.0118147182)
		(54676.44,0.01181478046)
		(55887.55,0.01181484058)
		(57156.28,0.01181490844)
		(59382.89,0.01181500305)
		(60821.99,0.01181508871)
		(61480.37,0.0118151318)
		(62078.93,0.01181518931)
		(64054.42,0.01181517619)
		(65491.3,0.01181524467)
		(66927.95,0.01181530431)
		(67962.38,0.01181535383)
		(69448.29,0.01181542794)
		(71385.26,0.01181549083)
		(72622.59,0.01181555477)
		(74689.28,0.01181561373)
		(77030.48,0.01181563146)
		(77630.91,0.01181565788)
		(78411.48,0.01181568856)
		(79894.4,0.01181572232)
		(81318.53,0.01181573849)
		(81938.61,0.01181575885)
		(83797.74,0.01181578605)
		(86768.82,0.01181581146)};
	\addlegendentry{~$\alpha=0.01$};		
	\end{axis}
	\end{tikzpicture}
	\caption{Temporal development of bounds for different $\alpha$.}\label{fig:alpha}
\end{figure}
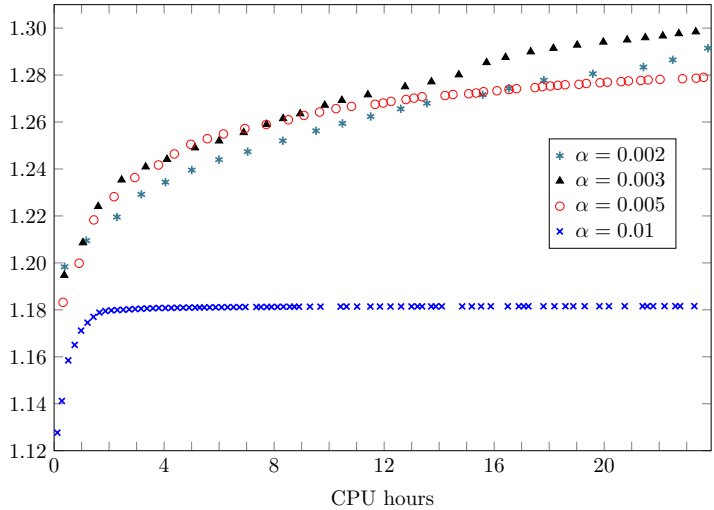

We next show the typical behavior of the optimal solutions of the
relaxation when adding more and more cutting planes. For the example
shown in Figure~\ref{fig:optsol}, we again have~$N_t=100$
and~$\smax=2$. Before adding the first cutting plane, the total
variation is not bounded by any constraint; we have~$|u^0|_{BV(0,T)}=8.74$ then. Adding
cutting planes quickly changes the shape of the optimal
solutions~$u^i$ as well as their total variation, which however does
not necessarily decrease monotonously. We emphasize that
neither the shape of~$u^i$ nor its total variation is directly
relevant for our approach, since we only aim at computing as tight lower
bounds as possible.
\begin{figure}[htb]
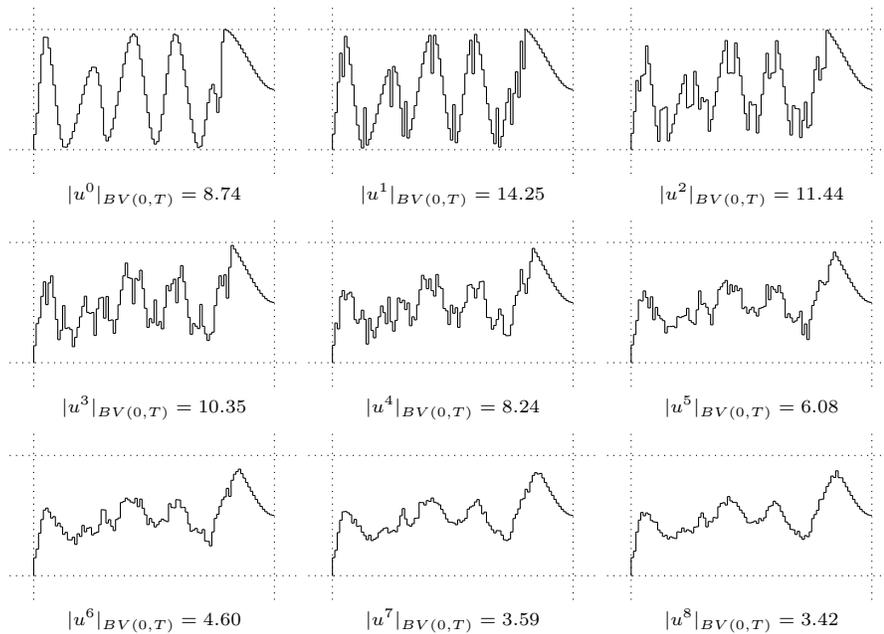

	\begin{center}
		\bigskip\bigskip
		\tikzset{every picture/.style={scale=1.6}}
		% cut 0
		% [inline block 0: 9 envs, 96351 chars -> data_tex | \begin{tikzpicture} 		\draw[dotted] (0,-0.2) -- (0,1.2);...]
		
		\caption{Development of optimal solutions.}\label{fig:optsol}
	\end{center}
\end{figure}

Finally, we investigate the impact of the number of time intervals
chosen for the discretization. Figure~\ref{fig:Nt}
demonstrates the temporal development of lower bounds for different
numbers $N_t$ and $\smax=2$. For a better comparison, we recalculate
the resulting lower bounds (y-axis) with a finer temporal
discretization, namely~$N_t=400$; note that this may lead to non-monotonous
bounds. We observe that a coarser time grid quickly leads to better
bounds, however, the accuracy of the lower bounds suffers
enormously. In fact, the bounds obtained for a given discretization
may not remain valid for a finer temporal grid.

\begin{figure}[htb]
	\centering
	\begin{tikzpicture}[scale=0.8]
	\begin{axis}[
	width=12.5cm,
	height=7.5cm,
	xmin=-0.2,
	xmax=86400,
	ymin=0.01065,
	ymax=0.0117,
	xtick={0, 3600, 7200,10800,14400,18000, 21600, 25200,28800,32400,36000,39600, 43200,46800, 50400, 54000, 57600, 61200, 64800, 68400, 72000, 75600, 79200,82800, 86400 },
	xticklabels={0,$ $, $ $,$ $, 4, $ $, $ $, $ $, 8, $ $, $ $, $ $, 12, $ $, $ $, $ $, 16, $ $, $ $, $ $, 20,$ $,$ $,$ $, 24},
	xlabel={CPU hours},
	xtick scale label code/.code={},
	ytick scale label code/.code={},
	every y tick label/.append style={ /pgf/number format/.cd,precision=2, fixed zerofill}, 
	legend style={at={(0.8,0.35)},anchor=north},
	legend cell align={left}
	]
	\addplot[only marks, mark=x, color=blue, mark size=1.5pt] coordinates {
		(108.56,0.01131019183)
		(263.53,0.01148970867)
		(480.6,0.01160036891)
		(667.13,0.0116574099)
		(866.36,0.01162155993)
		(1063.03,0.01163834517)
		(1259.63,0.01163593139)
		(1456.27,0.01161978442)
		(1652.87,0.01161763453)
		(1849.52,0.0116287118)
		(2015.9,0.01162901939)
		(2212.54,0.0116292398)
		(2409.14,0.01162487488)
		(3226.03,0.0116251867)
		(3427.63,0.0116244224)
		(3582.95,0.01162490327)};
	\addlegendentry{~$N_t=25$};
	\addplot[only marks, mark=o, color=red, mark size=1.5pt] coordinates {
		(209.22,0.01080080574)
		(508.36,0.01113409542)
		(927.13,0.01141409213)
		(1345.86,0.01146862651)
		(1734.69,0.01148391876)
		(2123.51,0.01148716228)
		(2512.2,0.01148926734)
		(2901.17,0.01149296179)
		(3290.16,0.01149351255)
		(3679.02,0.01149570716)
		(4067.83,0.0114985088)
		(4456.53,0.01150320171)
		(4845.38,0.01150560679)
		(5234.31,0.0115136072)
		(5653.26,0.01151344632)
		(6042,0.01151672484)
		(6430.73,0.01151723537)
		(7148.8,0.01151839416)
		(7537.68,0.01151849243)
		(7926.39,0.01151990539)
		(8315.07,0.01152116855)
		(8703.76,0.01152185164)
		(9092.5,0.01152618635)
		(9869.86,0.01152975045)
		(10258.5,0.01152898234)
		(10647.23,0.01152759928)
		(11335.63,0.0115273191)
		(11664.99,0.01152687063)
		(12053.75,0.01152751421)
		(13071.98,0.01152598081)
		(13460.73,0.01152688557)
		(14178.54,0.01152702959)
		(14866.42,0.01152760481)
		(15255.16,0.01152871078)
		(15973.88,0.01152798047)
		(17141.48,0.01152863229)
		(17877.86,0.01152846996)
		(18584.04,0.01152713859)
		(18921.55,0.01152761303)
		(19597.47,0.01152748037)
		(19999.08,0.01152715558)
		(20523.46,0.0115273914)
		(21141.51,0.01152679912)
		(22747.78,0.01152701681)
		(23395.38,0.0115270928)};
	\addlegendentry{~$N_t=50$};
	\addplot[only marks, mark=triangle*, mark size=1.5pt] coordinates {
		(436.69,0.01068722176)
		(1060.57,0.01089306183)
		(1933.47,0.01122871006)
		(2792.61,0.01133757599)
		(3644.54,0.01138968922)
		(4454.93,0.01141140455)
		(5266.19,0.0114225526)
		(6058.11,0.01142882746)
		(6845.59,0.01143391348)
		(7642.58,0.01143881487)
		(8452.31,0.01144335714)
		(9261.65,0.01144671483)
		(10052.51,0.01144934076)
		(10847.85,0.01145171391)
		(11659.27,0.01145399657)
		(12468.59,0.0114574224)
		(13279.69,0.01146141733)
		(14153.72,0.0114632694)
		(14964.21,0.01146565166)
		(15775.33,0.01146913375)
		(16646.73,0.01147056117)
		(17457.04,0.01147201401)
		(18268.81,0.01147322098)
		(19080.16,0.0114754614)
		(19891.46,0.01147779069)
		(20765.76,0.01147864187)
		(21576.67,0.01148033213)
		(22451.15,0.01148088572)
		(23263.36,0.01148252369)
		(24137.63,0.01148354241)
		(24928.93,0.01148444811)
		(25719.97,0.01148597117)
		(26512.09,0.01148813049)
		(27946.43,0.01148825672)
		(28757.64,0.01148951837)
		(30068.78,0.01148964224)
		(30880.16,0.0114908208)
		(31690.11,0.0114913646)
		(33108.28,0.01149214424)
		(33912.59,0.01149311225)
		(35409.53,0.01149327744)
		(36219.44,0.01149417131)
		(37716.36,0.01149371107)
		(38524.43,0.01149372949)
		(39310.44,0.01149377328)
		(40096.26,0.01149426276)
		(42379.69,0.01149535768)
		(43170.88,0.01149616424)
		(44645.48,0.01149697401)
		(46038.53,0.01149793406)
		(46704.66,0.01149868826)
		(47491.53,0.01149877763)
		(48763.04,0.01149918073)
		(49565.19,0.0114995429)
		(51000.69,0.01150001503)
		(51811.37,0.01150092139)
		(52497.78,0.01150120911)
		(53932.65,0.01150206771)
		(56465.87,0.01150294283)
		(57901.12,0.01150368724)
		(61389.23,0.01150427268)
		(62190.32,0.01150498694)
		(62976.51,0.01150482825)
		(64397.75,0.01150532404)
		(65887.9,0.0115058081)
		(66699.2,0.01150641602)
		(67509.47,0.01150671789)
		(68998.5,0.01150678309)
		(70214.79,0.01150706095)
		(70945.41,0.01150737469)
		(72220,0.01150740574)
		(73008.29,0.01150776479)
		(75069.53,0.01150806509)
		(76469.35,0.01150829997)
		(77931.17,0.01150835541)
		(79389.95,0.01150857913)
		(80178.24,0.01150874185)
		(81390.78,0.01150882761)
		(84117.42,0.01150911078)};
	\addlegendentry{~$N_t=100$};
	\end{axis}
	\end{tikzpicture}
	\caption{Development of lower bounds for refined time grid.}\label{fig:Nt}
\end{figure}
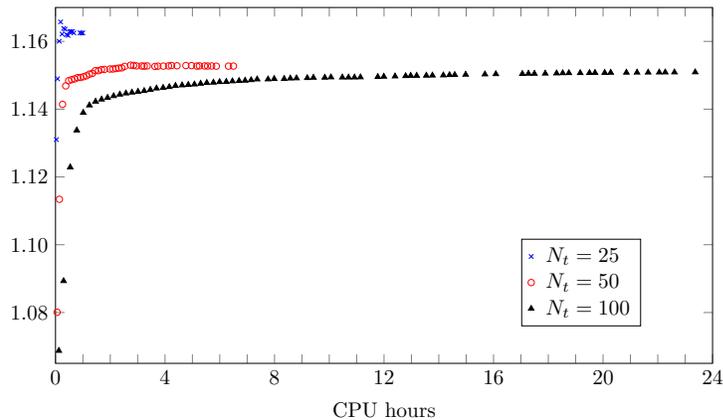
In a branch-and-bound
scheme, where larger parts of the switching structure will be fixed by the branching decisions, an adaptive
discretization of the problem may be rewarding. Such an approach could be practicable within our outer
approximation algorithm in function space, this is left as future work.

%%%%%%%%%%%%%%%%%%%%%%%%%%%%%%%%%%%%%%%%%%%%%%%%%%%%%%%%%%%%%%%
\appendix
\section{Existence of Lagrange multipliers}\label{sec: appendix}

This appendix shows the existence of Lagrange-multipliers for box constraints and 
finitely many linear inequality constraints, as appearing in the relaxation~\eqref{eq:PCk}.
A similar result under slightly less restrictive assumptions is shown in \cite{Wac22}, but, for convenience of the reader we present a proof based on standard arguments in detail.  
For that, we consider problems of the form
\begin{equation}\label{eq:p}
\left\{\quad
\begin{aligned}
\min \quad & f(u)\\
\text{s.t.} \quad & u_a(\xi) \leq u(\xi) \leq u_b(\xi) \quad \text{f.a.a.\ $\xi$ in } \Lambda\\
& G u \leq b
\end{aligned}
\right.
\end{equation}
where $\Lambda\subset \R^d$, $d \in \N$, is bounded and Lebesgue-measurable
and $f: L^2(\Lambda) \to \R$ is continuously Fr\'echet differentiable.
Moreover, $G: L^2(\Lambda) \to \R^m$, $m\in \N$ is linear and bounded and $b\in \R^m$ is given.
Finally, $u_a, u_b \in L^\infty(\Lambda)$ satisfy
\begin{equation}\label{eq:slater0}
u_a(\xi) + \delta \leq u_b(\xi) \quad \text{f.a.a.\ } \xi\in \Lambda
\end{equation}
with some $\delta > 0$. 

Note that $L^\infty(\Lambda) \embed L^2(\Lambda)$, since $\Lambda$ is bounded.
We will frequently regard $G$ and $f$ as mappings with domain $L^\infty(\Lambda)$ and, 
with a little abuse of notation, these maps are denoted by the same symbols.
Clearly, they are also Fr\'echet differentiable as mappings 
with domain in $L^\infty(\Lambda)$.

In the following, let $\bar u\in L^\infty(\Lambda)$ be a locally optimal solution of~\eqref{eq:p}. 
If we define the convex set $C := \{ u \in L^\infty(\Lambda) : G u \leq b\}$,
then~\eqref{eq:p} is equivalent to 
\[
\eqref{eq:p}
\quad \Longleftrightarrow \quad 
\left\{
\begin{aligned}
\min \quad & f(u) \\
\text{s.t.} \quad  & u - u_a \in K, \quad u_b - u \in K, \quad u \in C
\end{aligned}
\right.
\]
with $K := \{v \in L^\infty(\Lambda): v \geq 0 \text{ a.e.~in } \Lambda\}$.
Note that $K$ admits a non-empty interior as subset of $L^\infty(\Lambda)$.
Furthermore, due to the linearity and continuity of the mapping $G$ from $L^\infty(\Lambda)\embed L^2(\Lambda)$ to $\R^m$, 
the set $C$ is convex and closed.

In addition to~\eqref{eq:slater0}, we suppose that the Slater condition is fulfilled, i.e., 
we assume that there is a function $\hat u\in L^\infty(\Lambda)$ such that 
\begin{equation}\label{eq:slaterbdg}
G \hat u \leq b, \quad 
u_a(\xi) + \rho \leq \hat u(\xi) \leq u_b(\xi) - \rho
\end{equation}
with $\rho > 0$. Since Slater's condition implies Robinson's constraint qualification, 
	\cite[Theorem~3.9]{BS00} yields the existence of
	Lagrange multipliers $\mu_a, \mu_b\in L^\infty(\Lambda)^*$ such that 
\begin{gather}
\dual{f'(\bar u) + \mu_b - \mu_a}{u - \bar u}_{L^\infty(\Lambda)^*, L^\infty(\Lambda)} \geq 0
\quad \forall\, u \in C, \label{eq:gradeq}\\
\mu_b \in K^+, \quad \dual{\mu_b}{\bar u - u_b}_{L^\infty(\Lambda)^*, L^\infty(\Lambda)} = 0, 
\quad \bar u \leq u_b \text{ a.e.~in }\Lambda,\label{eq:compla}\\
\mu_a \in K^+, \quad \dual{\mu_a}{u_a - \bar u}_{L^\infty(\Lambda)^*, L^\infty(\Lambda)} = 0, 
\quad u_a \leq \bar u \text{ a.e.~in }\Lambda\;, \label{eq:complb}
\end{gather}
where the dual cone is given by
\[
K^+ := \{ \nu\in L^\infty(\Lambda)^* : \dual{\nu}{v}_{L^\infty(\Lambda)^*, L^\infty(\Lambda)} \geq 0
\quad \forall\, v\in K\}\;.
\]
In view of the definition of $C$, the gradient equation in~\eqref{eq:gradeq} is equivalent to 
\begin{equation}\label{eq:vi}
\dual{f'(\bar u) + \mu_b - \mu_a}{s}_{L^\infty(\Lambda)^*, L^\infty(\Lambda)} \geq 0
\quad \forall\, s \in G^{-1} \cone(\R^m_- - (G \bar u-b))\;,
\end{equation}
where $\cone$ denotes the conic hull and $\R^m_- := \{v\in \R^m: v \leq 0\}$.
The conic hull is given by
\[
\cone(\R^m_- - (G \bar u-b))
= \cone\big(\{ - \mathrm{e}_1, \ldots , -\mathrm{e}_m, - G\bar u + b\}\big)
\]
and, as the conic hull of finitely many points in $\R^m$, it is therefore closed.
For its polar cone we find by elementary calculus that 
\begin{equation}\label{eq:polcone}
\cone(\R^m_- - (G \bar u-b))^\circ = \cone\big(\{ \mathrm{e}_i : i\in \AA\}\big)\;,
\end{equation}        
where  $\AA := \{i \in \{1, \ldots, m\}: (G u)_i = b_i\}$ and $\mathrm{e}_i\in \R^m$, $i=1, \ldots., m$, denote the Euclidean unit vectors.
Moreover, the following holds true:

\begin{lemma}\label{lem:farkaspre}
		The set $G^* \cone(\R^m_- - (G\bar u - b))^\circ$ is a weakly-$\ast$ closed subset of $L^\infty(\Lambda)^*$.
	\end{lemma}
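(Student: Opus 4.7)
The plan is to exploit the finite-dimensional nature of the cone on the right-hand side of \eqref{eq:polcone}. Writing $\cone(\R^m_- - (G\bar u - b))^\circ = \cone\{\mathrm{e}_i : i\in\AA\}$ and applying the linear map $G^*$, I get
\[
G^* \cone\{\mathrm{e}_i : i\in\AA\} = \cone\{G^*\mathrm{e}_i : i \in \AA\} =: K_0,
\]
a finitely generated convex cone in $L^\infty(\Lambda)^*$ contained in the finite-dimensional subspace $V := \operatorname{span}\{G^*\mathrm{e}_i : i \in \AA\}$. So the task reduces to showing that a finitely generated cone inside a finite-dimensional subspace of $L^\infty(\Lambda)^*$ is weakly-$\ast$ closed.

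I would first recall that $L^\infty(\Lambda)^*$ endowed with the weak-$\ast$ topology is a Hausdorff locally convex topological vector space. In any such space every finite-dimensional linear subspace is closed and carries its unique Hausdorff linear topology, so the weak-$\ast$ topology restricted to $V$ coincides with the Euclidean topology on $V$. In particular, $V$ itself is weakly-$\ast$ closed in $L^\infty(\Lambda)^*$.

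Within $V$, seen as a finite-dimensional Euclidean space, the cone $K_0$ is a finitely generated convex cone, and a classical result of convex analysis (see, e.g., Rockafellar) asserts that any such cone is closed in the Euclidean topology. Combined with the previous paragraph, this shows that $K_0$ is closed in $V$ for the weak-$\ast$ topology inherited from $L^\infty(\Lambda)^*$. Since $V$ is itself weakly-$\ast$ closed, $K_0$ is weakly-$\ast$ closed in $L^\infty(\Lambda)^*$, which is exactly the claim.

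The only subtle point is the reduction to the finite-dimensional setting; the closedness of finite-dimensional subspaces in Hausdorff topological vector spaces and the equivalence of Hausdorff linear topologies on such subspaces are standard, so no real obstacle is expected. An alternative, slightly more elementary route would bypass the topological-vector-space machinery by identifying $V$ with $\R^{\dim V}$ via a basis drawn from $\{G^*\mathrm{e}_i : i \in \AA\}$ and verifying directly that any weakly-$\ast$ convergent net in $K_0$ has coefficients converging in $\R^{\dim V}$, but this amounts to the same argument.
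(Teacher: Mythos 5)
Your proof is correct and follows essentially the same route as the paper: both reduce the claim via \eqref{eq:polcone} to the weak-$\ast$ closedness of the finitely generated cone $\cone\{G^*\mathrm{e}_i : i\in\AA\}=\cone\{g_i : i\in\AA\}$, where $g_i\in L^2(\Lambda)$ are the Riesz representatives of the components of $G$. The only difference is that the paper simply cites \cite[Prop.~2.41]{BS00} for the weak-$\ast$ closedness of a finitely generated cone in a dual space, whereas you prove that fact directly by the standard finite-dimensional reduction (finite-dimensional subspaces of a Hausdorff locally convex space are closed and carry the Euclidean topology, and finitely generated cones in $\R^k$ are closed), which is a perfectly valid, self-contained substitute.
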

	
	\begin{proof}
		Because $G$ maps $L^2(\Lambda)$ linearly and continuously to $\R^m$, there exist functionals $g_i\in L^2(\Lambda)^*\cong L^2(\Lambda)$, $i = 1, \ldots, m$, such that 
		\[
		G u = \big( \dual{g_i}{u} \big)_{i=1}^m\;.
		\]
		With a slight abuse of notation, we denote the application of $g_i$ to functions in $L^\infty(\Lambda)$ 
		by the same symbol. Direct computation shows that 
		\begin{equation}\label{eq:Gadj}
		G^* : \R^m \to L^\infty(\Lambda)^*, \quad 
		G^* \lambda = \sum_{i=1}^m \lambda_i g_i
		\end{equation}
		such that~\eqref{eq:polcone} implies $G^* \cone(\R^m_- - (G\bar u-b))^\circ 	= \cone\big(\{g_i : i \in \AA\}\big)$.
		By~\cite[Prop.\ 2.41]{BS00}, the set $\cone\big(\{g_i : i \in \AA\}\big)$ is weak-* closed, so that the assertion follows.
\end{proof}

Thanks to Lemma~\ref{lem:farkaspre}, all prerequisites of the generalized Farkas lemma are fulfilled,  
see \eg~\cite[Prop.~2.4.2]{SCHI07}.
Therefore,~\eqref{eq:vi} is equivalent to the existence of a multiplicator
$\lambda \in \cone(\R^m_- - (G \bar u-b)) = \cone(\{ \mathrm{e}_i : i\in \AA\})$, cf.~\eqref{eq:polcone}, 
such that 
\begin{equation}
- f'(\bar u) - \mu_b + \mu_a - G^* \lambda = 0 
\quad \text{in } L^\infty(\Lambda)^*\;. \label{eq:gradeq2}
\end{equation} 
Now, we are in the position to prove the desired multiplier theorem:

\begin{theorem}\label{thm:lagrange}
	Suppose that a Slater point fulfilling~\eqref{eq:slaterbdg} exists and 
	let $\bar u$ be a locally optimal solution to~\eqref{eq:p}. Then there exist Lagrange multipliers 
	$\lambda \in \R^m$ and $\mu_a, \mu_b \in~L^2(\Lambda)$ such that 
	\begin{gather}
	f'(\bar u) + \mu_b - \mu_a + G^*\lambda = 0 \quad \text{a.e.~in }\Lambda, 
	\label{eq:gradeq3}\\
	\mu_a \geq 0, \quad \mu_a (\bar u - u_a) = 0, \quad \bar u \geq u_a \quad \text{a.e.~in }\Lambda, 
	\label{eq:compla2}\\
	\mu_b \geq 0, \quad \mu_b (\bar u - u_b) = 0, \quad \bar u \leq u_b \quad \text{a.e.~in }\Lambda, 
	\label{eq:complb2}\\
	\lambda \geq 0, \quad \lambda^\top (G\bar u - b) = 0, \quad G \bar u \leq b\;.
	\label{eq:compllambda}
	\end{gather}
\end{theorem}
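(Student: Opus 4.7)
The excerpt has already done the heavy lifting. Using the Slater condition~\eqref{eq:slaterbdg} and the Bonnans-Shapiro abstract multiplier theorem, multipliers $\mu_a,\mu_b \in L^\infty(\Lambda)^*$ satisfying~\eqref{eq:gradeq}--\eqref{eq:complb} have been obtained. The generalized Farkas lemma (Lemma~\ref{lem:farkaspre}) then produces $\lambda \in \cone\{\mathrm{e}_i : i \in \AA\}$, i.e.\ $\lambda \geq 0$ with $\lambda_i = 0$ for $i \notin \AA$, which yields both $\lambda^\top(G\bar u - b) = 0$ and $G\bar u \leq b$ (the latter is simply feasibility of the locally optimal $\bar u$). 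Hence~\eqref{eq:compllambda} is already in place. What remains is to upgrade $\mu_a,\mu_b$ from abstract functionals to non-negative $L^2$-functions and derive the pointwise conditions~\eqref{eq:gradeq3}--\eqref{eq:complb2}.

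I would set $h := f'(\bar u) + G^*\lambda$. Fr\'echet-differentiability of $f$ on $L^2(\Lambda)$ gives $f'(\bar u) \in L^2(\Lambda)$, and~\eqref{eq:Gadj} gives $G^*\lambda = \sum_i \lambda_i g_i \in L^2(\Lambda)$, so $h \in L^2(\Lambda)$. Equation~\eqref{eq:gradeq2} then reads
\[
\langle \mu_a - \mu_b, v \rangle_{L^\infty(\Lambda)^*,L^\infty(\Lambda)} = \int_\Lambda h\,v\,\d\xi \qquad \forall\,v\in L^\infty(\Lambda).
\]

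The crucial step is a localization showing that $\mu_a$ is concentrated on $A_a := \{\bar u = u_a\}$ and $\mu_b$ on $A_b := \{\bar u = u_b\}$. Fix $n \in \N$, let $E_n := \{\bar u - u_a > 1/n\}$, and take $v \in L^\infty(\Lambda)$ with $v \geq 0$ and $\supp v \subset E_n$. Then $0 \leq v \leq n\|v\|_\infty(\bar u - u_a)$ a.e., so positivity of $\mu_a$ combined with the complementarity $\langle \mu_a, \bar u - u_a\rangle = 0$ from~\eqref{eq:complb} forces $0 \leq \langle \mu_a, v\rangle \leq n\|v\|_\infty \langle \mu_a, \bar u - u_a\rangle = 0$. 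Splitting into positive and negative parts extends this to sign-changing $v$; since $\bigcup_n E_n = \{\bar u > u_a\}$, we obtain $\langle \mu_a, v\rangle = 0$ whenever $\supp v \subset \{\bar u > u_a\}$. An analogous argument, using~\eqref{eq:compla}, shows $\mu_b$ vanishes on functions supported in $\{\bar u < u_b\}$.

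The strict separation $u_a + \delta \leq u_b$ implies $A_a \cap A_b = \emptyset$, so $\mu_b$ also vanishes on functions supported in $A_a$. For arbitrary $v \in L^\infty(\Lambda)$ this yields
\[
\langle \mu_a, v\rangle = \langle \mu_a, v\chi_{A_a}\rangle = \langle \mu_a - \mu_b, v\chi_{A_a}\rangle = \int_{A_a} h\,v\,\d\xi,
\]
so $\mu_a$ is the $L^2$-functional represented by $h\chi_{A_a}$, and symmetrically $\mu_b$ is represented by $-h\chi_{A_b}$. Non-negativity of $\mu_a,\mu_b$ in $K^+$ forces $h \geq 0$ a.e.\ on $A_a$ and $h \leq 0$ a.e.\ on $A_b$, so both representatives are genuinely non-negative $L^2$-functions. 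Substituting back into $\mu_a - \mu_b = h$ gives $h = h\chi_{A_a\cup A_b}$, hence $h = 0$ a.e.\ on $\{u_a < \bar u < u_b\}$; the pointwise gradient equation~\eqref{eq:gradeq3} and the complementarity~\eqref{eq:compla2}, \eqref{eq:complb2} are then immediate by construction. The main obstacle is precisely this localization step: promoting the abstract positive functionals $\mu_a, \mu_b$ to $L^2$-representatives requires combining the order-monotonicity of positive functionals on $L^\infty$ with the slackness identities to pin down their support on the active sets.
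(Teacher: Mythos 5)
Your treatment of $\lambda$ and of the identity $\mu_a-\mu_b=f'(\bar u)+G^*\lambda$ in $L^\infty(\Lambda)^*$ matches the preparatory material, but the localization step that is supposed to turn the abstract functionals $\mu_a,\mu_b$ into $L^2$-functions has a genuine hole. The passage from ``$\dual{\mu_a}{v}=0$ for every $v$ supported in $E_n=\{\bar u-u_a>1/n\}$'' to ``$\dual{\mu_a}{v}=0$ for every $v$ supported in $\bigcup_n E_n=\{\bar u>u_a\}$'' is precisely the kind of monotone-limit argument that elements of $L^\infty(\Lambda)^*$ do not permit: they are only finitely additive, and $v\chi_{E_n}\to v\chi_{\{\bar u>u_a\}}$ pointwise a.e.\ does not give convergence in the $L^\infty$-norm. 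A positive functional of Banach-limit type, concentrated where $\bar u-u_a$ is positive but not bounded away from zero, satisfies $\sigma\geq 0$ and $\dual{\sigma}{\bar u-u_a}=0$ without annihilating every $v$ supported in $\{\bar u>u_a\}$; so positivity plus complementarity alone do not pin the support of $\mu_a$ down to $A_a$. The argument can be repaired, but only by invoking the gap $u_b-u_a\geq\delta$ earlier than you do: localize to $B_a:=\{\bar u-u_a<\delta/2\}$ and $B_b:=\{u_b-\bar u<\delta/2\}$ rather than to the exact active sets. Then $\Lambda\setminus B_a$, $\Lambda\setminus B_b$, $B_a$ and $B_b$ are each contained in a \emph{single} level set of the form $\{\bar u-u_a>1/n\}$ resp.\ $\{u_b-\bar u>1/n\}$, so your comparison argument applies with no limiting process, and one obtains $\mu_a=(f'(\bar u)+G^*\lambda)\chi_{B_a}$ and $\mu_b=-(f'(\bar u)+G^*\lambda)\chi_{B_b}$ as honest $L^2$-representatives; the pointwise complementarities then follow because $\int_{B_a}(f'(\bar u)+G^*\lambda)(\bar u-u_a)\,\d\xi=0$ with a nonnegative integrand.

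For comparison, the paper sidesteps the regularity question entirely. It never identifies the abstract $\mu_a,\mu_b$: it tests \eqref{eq:gradeq2} with $v=u-\bar u$ for arbitrary $u$ satisfying $u_a\leq u\leq u_b$, uses \eqref{eq:compla}--\eqref{eq:complb} only to drop the $\mu_a,\mu_b$-terms by sign, and arrives at the variational inequality $\int_\Lambda\bigl(f'(\bar u)+G^*\lambda\bigr)(u-\bar u)\,\d\xi\geq 0$ over the box. The multipliers appearing in the theorem are then \emph{defined} as the positive and negative parts of $f'(\bar u)+G^*\lambda$, and \eqref{eq:gradeq3}--\eqref{eq:complb2} are read off pointwise by the standard argument of \cite[Thm.~2.29]{Troe10}. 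Discarding the original functionals and rebuilding the multipliers from the variational inequality is shorter and immune to the finite-additivity issue that your route has to confront.
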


\begin{proof}
	Let $u\in L^\infty(\Lambda)$ with $u_a \leq u \leq u_b$ a.e.~in $\Lambda$ be arbitrary. 
	Inserting $v = u - \bar u$ in~\eqref{eq:gradeq2}, we obtain 
	\[
	\begin{aligned}
	0 &= \dual{f'(\bar u) + G^* \lambda}{u - \bar u} 
	+ \dual{\mu_b}{u - u_b} + \dual{\mu_b}{u_b - \bar u}
	+ \dual{\mu_a}{u_a - u} + \dual{\mu_a}{\bar u - u_a}\\
	& \leq \dual{f'(\bar u) + G^* \lambda}{u - \bar u}\;,
	\end{aligned}
	\]
	where we used~\eqref{eq:compla} and~\eqref{eq:complb} for the last estimate.
	Since $f'(\bar u) \in L^2(\Lambda)^* \cong L^2(\Lambda)$ and $G^*$ maps $\R^m$ to $L^2(\Lambda)$ 
	by the regularity of $g_i$, $i=1, \ldots, m$, the last inequality is equivalent to
	\begin{equation}\label{eq:vi2}
	\int_\Omega ( f'(\bar u) + G^* \lambda)(u - \bar u) \, \d \xi \geq 0
	\quad \forall \,u\in L^\infty(\Lambda) : u_a \leq u \leq u_b \text{ a.e.~in } \Lambda.
	\end{equation}
	Now, we introduce the functions $\mu_a, \mu_b\in L^2(\Lambda)$ by
	\[
	\begin{aligned}
	\mu_a(\xi) := \max\{( f'(\bar u) + G^* \lambda)(\xi), 0\}, \quad 
	\mu_b(\xi) := - \min\{( f'(\bar u) + G^* \lambda)(\xi), 0\} \quad \text{a.e.~in } \Lambda
	\end{aligned}
	\]
	and denote them by $\mu_a$ and $\mu_b$, too, with a little abuse of notation.
	Then, by means of standard arguments as \eg in~\cite[Thm.~2.29]{Troe10}, one deduces 
	\eqref{eq:gradeq3},~\eqref{eq:compla2}, and~\eqref{eq:complb2} from~\eqref{eq:vi2}.
	Finally,~\eqref{eq:compllambda} follows from 
	$\lambda \in \cone(\{ \mathrm{e}_i : i\in \AA\})$ and $(Gu - b)_i = 0$ for all~$i\in \AA$.
\end{proof}

\begin{remark}
	Theorem~\ref{thm:lagrange} readily carries over to vector valued box constraints in~$\R^n$ 
	as in~\eqref{eq:PCk}, but in order to keep the discussion concise, we restricted it to the scalar case here.
	
\end{remark}

\fontsize{9}{10.5}\selectfont
\bibliographystyle{siam}
\bibliography{reference}
\end{document}